%
%

\documentclass[11pt, twoside, letterpaper]{amsart}

\usepackage{t1enc}
\usepackage{latexsym}
\usepackage{amssymb}
\usepackage{graphicx}
\usepackage{amsmath}
\usepackage{amsthm}
\usepackage{amsfonts}

\usepackage{mathrsfs}

\usepackage[all]{xy}
\usepackage[british]{babel}

\usepackage{color}
\usepackage[hypertexnames=false,
    pdftex,
	pdfpagemode=UseNone,
	breaklinks=true,
	extension=pdf,
	colorlinks=true,
	linkcolor=blue,
	citecolor=blue,
	urlcolor=blue,
]{hyperref}

\newcommand{\cyclic}{\mathop{\kern0.9ex{{+}\kern-2.10ex\raise-0.20
      ex\hbox{\Large\hbox{$\circlearrowright$}}}}\limits}

\setlength{\textwidth}{13cm}

\newtheoremstyle{daniel}{3.0mm}{2mm}{\itshape}{}{\bfseries}{.}{1.5mm}{}
\theoremstyle{daniel}
\newtheorem{thm}{Theorem}[section]
\newtheorem{prop}[thm]{Proposition}
\newtheorem{Defi}[thm]{Definition}
\newtheorem{lemma}[thm]{Lemma}

\newtheorem{Exs}[thm]{Examples}
\newtheorem{Rems}[thm]{Remarks}

\newtheorem*{thm*}{Theorem}
\newtheorem*{cor*}{Corollary}
\newtheorem*{thm3.6}{Theorem 3.6}
\newtheorem*{thm4.3}{Theorem 4.3}

\newtheorem*{prop*}{Proposition}
\newtheorem*{Notation}{Notation}

\newtheorem{claim}[thm]{Claim}
\newtheorem{Def}[thm]{Definition}

\newtheorem{Rem}[thm]{Remark}

\newtheorem{Ex}[thm]{Example}

\newtheorem*{Setup}{Setup}

\def\cC{\mathcal C }

\def\cF{\mathcal F}
\def\cH{\mathcal H}

\def\cP{\mathcal P}

\def\cO{\mathcal O}

\def\cT{\mathcal T}

\def\gm{{\mathfrak m}}

\newcommand{\C}{\mathbb{C}}
\newcommand{\D}{\mathbb{D}}
\newcommand{\N}{\mathbb{N}}

\newcommand{\R}{\mathbb{R}}
\newcommand{\Z}{\mathbb{Z}}

\DeclareMathOperator{\Coh}{Coh}
\def\ch{\mbox{ch}}
\def\Coker{\mbox{Coker}}
\DeclareMathOperator{\cycle}{cycle}
\def\d{\mbox{d}}
\def\gr{\mbox{gr}_{JH}}
\def\Hom{\mbox{Hom}}
\def\im{\mbox{Im}}
\def\id{\mbox{id}}
\def\Ker{\mbox{Ker}}
\def\NS{\mbox{NS}}

\def\Spec{\mbox{Spec}}

\def\Tors{\mbox{Tors}}

\DeclareMathOperator{\rk}{rk}

\numberwithin{equation}{section}

\begin{document}
\title[families of coherent analytic sheaves]{Properness criteria for families of coherent analytic sheaves}

\author{Matei Toma}
\address{Matei Toma, Universit\'e de Lorraine, CNRS, IECL, F-54000 Nancy, France
}
\email{Matei.Toma@univ-lorraine.fr}

\date{\today}
\keywords{semistable coherent sheaves, moduli of sheaves, properness criteria}
\subjclass[2010]{32G13, 14D20}

\begin{abstract}
We extend Langton's valuative criterion for families of coherent algebraic sheaves to a complex analytic set-up. As a consequence we derive a set of sufficient conditions for the compactness of moduli spaces of semistable sheaves over  compact complex manifolds. This  applies also to some cases appearing in complex projective geometry not  covered by previous results.  
\end{abstract}
\maketitle
\setcounter{tocdepth}{1}
\tableofcontents
\noindent


\section{Introduction}

There is a variety of situations when moduli spaces of semistable coherent sheaves over projective schemes are known to exist, see \cite{HL} for an extensive treatment of this topic. 
Such moduli spaces typically turn out to be projective. This is often proved by using a result of Langton who checked that Grothendieck's valuative criterion for properness applies to families of semistable sheaves   \cite{Langton}, \cite[Theorem 2.B.1]{HL}. 
Sometimes even if the base variety $X$ is projective, in order to define stability one may be led to consider arbitrary real ample classes as polarizations, cf.  \cite{GrebToma}, \cite{GRT1}, \cite{GRTsurvey},  \cite{CampanaPaun}. However when the polarization is irrational Langton's result doesn't directly apply and its proof cannot be adapted in a straightforward way to such a case.

When the base variety is compact complex analytic, one may still speak of semistability with respect to  Gauduchon metrics. But even over compact K\"ahler manifolds an analogous result to Langton's is not available and complex analytic moduli spaces for semistable sheaves have only been constructed in special cases. 

It is the purpose of this paper to provide replacements of properness valuative criteria as in Langton's result in a complex analytic set-up and to show how they may be applied to prove compactness of moduli spaces of semistable sheaves.  
For coherent sheaves over a compact complex manifold a fairly general semistability notion will be described in Section \ref{section:degrees}. (We define semistability only for pure sheaves but we later give analogues of our statements in the non-pure case.) 
Then in Section \ref{section:one-dimensional-case} we deal with families of semistable sheaves over one dimensional bases. In this context we prove Theorem \ref{thm:dimension one} an analogue of which over spectra of discrete valuation rings already provides a solution to the case of irrational polarizations mentioned above. This theorem basically says that for a flat family of coherent sheaves with general semistable members over a curve one may replace the special members in such a way that the resulting family has only semistable fibres. 
When the base manifold is not projective however, such a result on one dimensional families may be no longer sufficient. 
This is related to the fact that the two current definitions of meromorphic mapppings do not coincide. Stoll's definition which uses extensions along curves is weaker than Remmert's which requires a factorization through a proper modification, \cite[pp. 424-428]{StollMeromorpheAbbildungenII}, see also   \cite{Hirschowitz}.  
In Section \ref{section:arbitrary-dimension} we prove Theorem \ref{thm:dimension>1} which provides a replacement for Langton's valuative criterion also for families over higher dimensional bases, albeit under a more restrictive semistabilty condition. Using this result we give in Section \ref{section:application} sufficient conditions for a moduli space of semistable sheaves over a compact complex manifold to be compact. The paper ends with a criterion for separatedness (Theorem \ref{thm:separation}).

For Theorem \ref{thm:dimension one} we follow Langton's original line of proof with two new inputs, one of combinatorial nature which compensates the lack of rationality of the polarizations involved, and one application of Artin approximation which avoids non-properness issues of the relative Douady space. Theorems   \ref{thm:dimension>1} and \ref{thm:application} 
are essentially new and they are especially pertinent to the complex analytic context. Their proofs depend on a notion of boundedness for sets of isomorphy classes of coherent analytic sheaves, which was introduced in \cite{TomaLimitareaI}.  

{\em Acknowledgements:} I wish to thank Sergei Ivashkovich for pointing out to me Hirschowitz' paper \cite{Hirschowitz} which discusses the two definitions of meromorphic mappings.  


\section{Degree functions and stability}\label{section:degrees}
Various definitions of semistability for coherent sheaves on projective manifolds are in use and many recent papers aim at a formalization of their properties, see e.g. \cite{JoyceIII}, \cite{Andre}.

Here we content ourselves with the presentation of the stability notion which will appear in the results of this paper. 
It will use a generalization of the Hilbert polynomial of a coherent sheaf which will make sense in a complex geometric (not necessarily projective) framework. This notion finds applications even when the base space $X$ is a smooth projective variety; cf. \cite{GrebToma}, \cite{GRT1}, \cite{GTgood}. In order to introduce it one may choose to work either on a category $\Coh_d(X)$ of coherent sheaves of dimension at most $d$ on an analytic space $X$ or on a quotient category $\Coh_{d,d'}(X):=\Coh_d(X)/\Coh_{d'-1}(X)$ as in \cite[Section 1.6]{HL}. In this paper we chose the first but the statements can be easily rephrased to stay valid for the second approach.  

In the sequel $X$ will be a compact analytic space of dimension $n$ and $d$, $d'$ will denote integers satisfying 
$n\ge d\ge d'\ge 0$. In particular $X$ may be the associated  analytic space of a proper algebraic space over $\C$.

We denote by $K_0(X)=K_0(\Coh(X))$ the Grothendieck group of coherent sheaves on $X$ and by $[F]$ the class in $K_0(X)$ of a coherent sheaf $F$.
If $F$ has dimension at most $p$, we write $\cycle_p(F)$ for the $p$-cycle associated to $F$.  

\begin{Def} Degrees.
\label{def:degrees}
Consider a group morphism 
$\deg_p:K_0(X)\to\R$ inducing a positive map on non-zero $p$-cycles when putting $\deg_p(Z):=\deg_p([\cO_Z])$ for irreducible $p$-cycles $Z$, and the following properties:
\begin{enumerate}
\item $\deg_p([F])=\deg_p(\cycle_p(F))$ for any $F\in\Coh_p(X)$,
\item if a set of positive $p$-cycles is such that $\deg_p$ is bounded on it, then $\deg_p$ takes only finitely many values on this set, 
\item $\deg_p$ is 
continuous
on flat families of sheaves.
\item $\deg_p$ is locally constant 
on flat families of sheaves.
\end{enumerate}
We will call such a function a {\em degree function in dimension $p$} if it has properties (1), (2) and (3).
If only the first property is satisfied, we will call it a {\em weak degree function in dimension $p$} and if $\deg_p$ has all four properties, we will call it a {\em strong degree function in dimension $p$}.
We will write for simplicity $\deg_p(F)=\deg_p([F])$ for any $F\in \Coh(X)$.

A collection of  degree functions $(\deg_d,...,\deg_{d'})$ in dimensions $d$ to $d'$ on $X$ will be called a {\em  $(d,d')$-degree system} and similarly for weak or strong degree functions.
\end{Def}

Note that for any weak degree function $\deg_p$ in dimension $p$ on $X$ one has $\deg_p(F)>0$  if $F$ is $p$-dimensional, and $\deg_p(F)=0$ if $F\in\Coh_{p-1}(X)$.

Strong degree functions appear naturally if $X$ is endowed with  differential forms $\omega_{p}$ of degree $2p$ which are $\d$-closed and such that their   $(p,p)$-components $\omega_p^{p,p}$ are strictly positive in Lelong's sense; cf. \cite[III.2.4, IV.10.6]{BarletMagnusson}. In this case for $F\in \Coh(X)$ one defines   
$\deg_p(F):=\int_{\tau_p(F)}\omega_p$, where $\tau(F)$ is the homological Todd class of $\cF$ and the integral is computed on a semianalytic representative of $\tau_p(F)$, cf. \cite[Section 2]{TomaLimitareaI},  \cite{BlHe69}, \cite{DoPo77}, \cite[Thm. 7.22]{AG}, \cite[8.4]{Gor81}, \cite{Her66}. 
These functions satisfy condition (2) of Definition \ref{def:degrees} since they are constant on connected components of the corresponding cycle spaces and since no analytic space can have an infinite number of irreducible components accumulating at a point.
When $(X,\omega)$ is a K\"ahler space, cf. \cite[II.1.2]{Var89}, we obtain such $2p$-forms as $p$-powers of the K\"ahler form $\omega$. When
 $(X, \cO_X(1))$ is a projective variety endowed with an ample line bundle, by taking $\omega$ a strictly positive curvature form of $\cO_X(1)$, we recover the coefficient of the Hilbert polynomial of $F$ in degree $p$ as 
$$
\frac{\deg_p(F)}{p!}.
$$  
However even  on projective manifolds one is naturally led to consider degree functions which are not associated to an ample polarization, cf. \cite{GrebToma}, \cite{GRT1}, \cite{GRTsurvey},  \cite{CampanaPaun}, \cite{GKP-Movable}. 

In the above situation the condition $\d\omega^{p,p}_p=0$ implies that the corresponding degree function is locally constant on flat families of sheaves. An example having found applications in the literature, where only continuity holds is that of degree functions on non-K\"ahlerian  compact manifolds. Such a manifold always carries a Gauduchon form $\omega_{n-1}$, i.e. such that $\omega_{n-1}$ is positive of type $(n-1,n-1)$ and $\partial\bar\partial\omega_{n-1}=0$. One then defines degree functions in dimensions $n$ and $n-1$ by setting $\deg_n(F):=\rk(F)$ and $\deg_{n-1}(F):=\int_X [\omega_{n-1}]_A\cdot c_{1}(F)_{BC}$, where the classes are computed in Aeppli cohomology and in Bott-Chern cohomology respectively, cf. \cite{LuTe}, \cite{Tel10}.
More generally, any strictly positive $\partial\bar\partial$-closed $(p,p)$-form on $X$ gives rise to a degree function in dimension $p$. One shows that 
 Condition (2) of Definition \ref{def:degrees} is  satisfied by the same argument as in the K\"ahler case. Indeed such a  function is pluriharmonic on the cycle space by \cite[Proposition 1]{BarletConvexitate} and attains its minimum on any closed subset of the cycle space by Bishop's theorem. It is therefore constant on any irreducible component of this space. Note that any compact complex manifold $X$ admits a degree function in dimension zero defined by $\deg_0(F):=\int_X\ch_n(F)$. In this way Gauduchon surfaces $(X,\omega_1)$ get a $(2,0)$-degree system.

 For the following definition the order relation we will consider on $\R^{d-d'+1}$ will be   the lexicographic order.

\begin{Def}Semistability.

Suppose that $X$ is equipped with a weak $(d,d')$-degree system $(\deg_{d},...,\deg_{d'})$. For any $d$-dimensional coherent sheaf $F$ we define its slope vector with respect to this system as
$$\mu(F):=(\frac{\deg_{d}(F)}{\deg_d(F)},\frac{\deg_{d-1}(F)}{\deg_d(F)},..., \frac{\deg_{d'}(F)}{\deg_d(F)})\in \R^{d-d'+1}.$$
A $d$-dimensional sheaf $F$ will be called {\em slope-semistable} or just {\em semistable} if  it is pure and if for any non-trivial subsheaf $E\subset F$ we have $\mu(E)\le\mu(F)$.
\end{Def}

Note that in the case $d'=d$,  semistability on $\Coh_d(X)$ just means $d$-dimensional purity.

With literally the same proof as in \cite[Section 1.3]{HL} one checks the existence of a Harder-Narasimhan filtration for the above semistability notion:

\begin{thm} With respect to a weak $(d,d')$-degree system on $X$ any pure $d$-dimensional sheaf $F$ admits a unique increasing filtration
$$0=HN_0(F)\subset  HN_1(F)\subset...\subset HN_l(F)=F,$$
with semistable factors $HN_i(F)/HN_{i-1}(F)$ for $1\le i\le l$ and such that
$$\mu( HN_1(F)/HN_{0}(F))>...>\mu( HN_l(F)/HN_{l-1}(F)).$$
\end{thm}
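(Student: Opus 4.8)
The plan is to follow the classical Harder--Narasimhan construction, exactly as carried out in \cite[Section 1.3]{HL}, replacing the reduced Hilbert polynomial by the slope vector $\mu$ and the comparison of polynomials for large argument by the lexicographic order. The first thing I would record is that the lexicographic order makes $\R^{d-d'+1}$ an ordered $\R$-vector space. Consequently, for a short exact sequence $0\to E\to F\to Q\to 0$ of $d$-dimensional sheaves, additivity of each $\deg_p$ (they are group morphisms on $K_0(X)$) gives $\mu(F)=\lambda\,\mu(E)+(1-\lambda)\,\mu(Q)$ with $\lambda=\deg_d(E)/\deg_d(F)\in(0,1)$, using that $\deg_d>0$ on $d$-dimensional sheaves. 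Since $\mu(F)$ is then a convex combination of $\mu(E)$ and $\mu(Q)$, the see-saw property follows: $\mu(E)\le\mu(F)\iff\mu(E)\le\mu(Q)\iff\mu(F)\le\mu(Q)$, with the same equivalences for strict inequalities. When $\dim Q<d$ one has instead $\mu(E)\le\mu(F)$ from the positivity of $\deg_p$ on lower-dimensional sheaves, which is what lets one reduce to saturated subsheaves.

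Next I would produce the maximal destabilizing subsheaf $HN_1(F)$. Set $\mu_{\max}(F):=\sup\{\mu(E)\mid 0\ne E\subseteq F\}$ and show this supremum is attained. For any subsheaf one has $\deg_d(E)\le\deg_d(F)$ because $\deg_d(F/E)\ge0$, so by property (2) of Definition~\ref{def:degrees} the value $\deg_d(E)$ ranges over a finite set. Restricting to the subsheaves with $\mu(E)\ge\mu(F)$, the boundedness of this family (the analytic counterpart of Grothendieck's lemma on subsheaves of bounded slope), combined with property (2), forces the vectors $\mu(E)$ to take only finitely many values that are $\ge\mu(F)$; hence $\mu_{\max}(F)$ is attained. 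Among the subsheaves realizing $\mu_{\max}(F)$ I would choose one, $HN_1(F)$, of maximal $\deg_d$. Uniqueness follows since for two such $E_1,E_2$ a short computation with the see-saw property shows $E_1+E_2$ again realizes $\mu_{\max}(F)$ while $\deg_d(E_1+E_2)>\deg_d(E_1)$ unless $E_1=E_2$; the same maximality shows $HN_1(F)$ is semistable. I expect this attainment step to be the main obstacle, just as in the algebraic case: everything else is formal, but the existence of a slope-maximal subsheaf rests on the boundedness of the relevant family of subsheaves, which must be supplied in the complex analytic framework and is precisely where condition (2) enters.

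With $HN_1(F)$ available I would build the filtration inductively, letting $HN_i(F)$ be the preimage in $F$ of the maximal destabilizing subsheaf of $F/HN_{i-1}(F)$. Each factor $HN_i(F)/HN_{i-1}(F)$ is then semistable by construction, and the strict inequality $\mu(HN_i/HN_{i-1})>\mu(HN_{i+1}/HN_i)$ follows from the see-saw together with maximality: were it to fail, the preimage in $F/HN_{i-1}$ of the destabilizing subsheaf of the next quotient would be a subsheaf of slope $\ge\mu(HN_i/HN_{i-1})$ properly containing $HN_i/HN_{i-1}$, contradicting maximality. The process terminates because $\deg_d(HN_i(F))$ is strictly increasing in $i$ (each factor is $d$-dimensional, hence contributes a positive amount) while staying bounded by $\deg_d(F)$, and such a sequence is finite by property (2).

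Finally I would prove uniqueness by the standard comparison argument. Given any filtration with the stated properties, its first step $G_1$ is semistable of slope equal to the largest factor slope, so $\mu(G_1)\le\mu_{\max}(F)=\mu(HN_1(F))$; conversely a nonzero map $HN_1(F)\to F/G_1$ would yield a subsheaf of $F/G_1$ of slope $\ge\mu(HN_1(F))$, which is impossible because every subsheaf of $F/G_1$ has slope $<\mu(HN_1(F))$. Hence $HN_1(F)\subseteq G_1$, and symmetry forces $G_1=HN_1(F)$. Passing to $F/HN_1(F)$ and inducting on the length then gives uniqueness of the whole filtration, completing the proof.
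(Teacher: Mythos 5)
Your overall architecture (see-saw property, maximal destabilizing subsheaf, induction, uniqueness by comparison) matches the intended proof, which the paper dispatches by citing \cite[Section 1.3]{HL}; but there is a genuine gap at the step you yourself single out as the main obstacle, namely the attainment of $\mu_{\max}(F)$. The theorem is stated for a \emph{weak} $(d,d')$-degree system, which by Definition \ref{def:degrees} has only property (1); property (2) is not among the hypotheses, so you cannot invoke it either to make the values $\mu(E)$ finite in number or to terminate the inductive construction. (The finiteness of the values of $\deg_d(E)$ does hold, but for a different reason: $0\le\cycle_d(E)\le\cycle_d(F)$ and a fixed positive cycle has only finitely many subcycles.) Worse, the ``analytic counterpart of Grothendieck's lemma'' that you appeal to in order to bound $\deg_\delta(E)$ from above for $\delta<d$ on saturated subsheaves of slope $\ge\mu(F)$ is precisely the hard content that would need proof, and it is neither supplied by you nor available in the paper at this point; note also that the lexicographic order on $\R^{d-d'+1}$ does not have the least upper bound property, so even defining $\mu_{\max}(F)$ as a supremum is delicate.

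The reason the paper can say ``literally the same proof as in \cite[Section 1.3]{HL}'' is that the argument there never uses boundedness or discreteness of slopes at this stage. One introduces the partial order $E_1\preceq E_2$ iff $E_1\subseteq E_2$ and $\mu(E_1)\le\mu(E_2)$ on the nonzero saturated subsheaves of $F$; ascending chains stabilize because $F$ is a Noetherian object, so $\preceq$-maximal elements exist, and among these one chooses $HN_1(F)$ with $\deg_d$ minimal (possible since the $d$-cycles of subsheaves of $F$ form a finite set). Comparing an arbitrary subsheaf $E$ with $HN_1(F)$ via $E\cap HN_1(F)$ and $E+HN_1(F)$ and the see-saw property then yields maximality, semistability and uniqueness of $HN_1(F)$, using only that the slope vectors lie in a totally ordered set. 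If you replace your supremum-attainment step by this Noetherian order argument, the rest of your proof (the see-saw computation, the inductive construction, termination via the finiteness of subcycles of $\cycle_d(F)$, and the uniqueness argument) goes through essentially as written.
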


In particular under the above hypotheses $HN_1(F)$ has the properties of a {\em maximal destabilizing subsheaf} of $F$, i.e. for all subsheaves $E\subset F$ one has $\mu(E)\le \mu(HN_1(F))$, and in case of equality $E\subset HN_1(F)$. Moreover, $HN_1(F)$ is a {\em saturated subsheaf of $F$}, i.e the quotient $F/HN_1(F)$ is either zero or pure $d$-dimensional.

Before we go on to the relative case let us remark that purity is a Zariski open property in flat families of coherent sheaves. Indeed 
if  $S$ is any analytic space and if $E$ is a flat family of $d$-dimensional coherent sheaves on the fibres of $X$ parametrized by $S$, then one can adapt Maruyama's approach in  \cite[Proposition 1.13]{Maruyama-Construction} to prove that the set of points $s\in S$ such that $E_s$ is not pure is a closed analytic subset of $S$.  It suffices to work in loc. cit. with local resolutions and apply the purity criterion from \cite[Lemma 1.12]{Maruyama-Construction}.

By a
{\em relative Harder-Narasimhan filtration} for a flat family $E$ of $d$-dimensional coherent sheaves on $X$ parametrized by an irreducible analytic space $S$ we mean a proper bimeromorphic morphism of irreducible analytic spaces $T\to S$ together with a filtration 
$$0=HN_0(E)\subset  HN_1(E)\subset...\subset HN_l(E)=E_T$$
such that the factors $HN_i(E)/HN_{i-1}(E)$ are flat over $T$ for $1\le i\le l$ and which induces the absolute Harder-Narasimhan filtrations fibrewise over some dense Zariski open subset of $S$, cf. \cite{TomaLimitareaII} for a more general situation and \cite[Section 2.3]{HL} for the projective algebraic case. 

In order to obtain a relative Harder-Narasimhan filtration for a family of sheaves we need stronger assumptions on the degree functions. Using the techniques of \cite{TomaLimitareaI}  the following result is obtained in \cite{TomaLimitareaII}.

\begin{thm}\label{thm:relativeHNfiltration} Suppose that $X$ is endowed with a strong $(d,d')$-degree system induced by a system of strictly positive $\partial\bar\partial$-closed differential forms.  Then with respect to the corresponding semistability notion every  flat family $E$ of $d$-dimensional coherent sheaves on $X$ with pure general members parametrized by an irreducible analytic space $S$ has a relative Harder-Narasimhan filtration $(T\to S, HN_\bullet(E))$. Moreover this filtration has the following universal property:  if $f:T'\to S$ is a bimeromorphic morphism of irreducible analytic spaces and if $F_\bullet$ is a filtration of $E_{T'}$ with flat factors, which coincides  fibrewise with the absolute Harder-Narasimhan filtration over general points $s\in S$, then $f$ factorizes over $T$ and $F_\bullet=HN_\bullet(E)_{T'}$.
\end{thm}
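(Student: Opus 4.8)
The plan is to construct the relative Harder-Narasimhan filtration by first producing it generically over $S$, then spreading it out as a flat family over a suitable modification $T\to S$, and finally establishing the universal property by uniqueness of the absolute filtration fibrewise. The key technical inputs are the boundedness results of \cite{Toma16} and the existence theorem for relative Douady spaces, so that the various destabilizing subsheaves appearing in the HN filtrations of the fibres $E_s$ form a bounded family parametrized by a locally finite union of components of a relative Douady space over $S$.

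The first step is to set up the relevant parameter space. Since the degree system is strong, it is locally constant on flat families, so the numerical invariants (ranks, slope vectors) of the HN factors are constant over the locus where the fibres are pure. Using boundedness from \cite{Toma16}, I would show that the candidate maximal destabilizing subsheaves $HN_1(E_s)\subset E_s$, as $s$ ranges over the pure locus, have bounded Hilbert data and hence are parametrized by finitely many components of the relative quotient/Douady space $\mathrm{Quot}(E/X\times S/S)$. By the universal property of the absolute HN filtration (maximality of $\mu(HN_1)$ together with the uniqueness already recorded after the Harder-Narasimhan theorem above), over a dense Zariski open $S^\circ\subset S$ there is a distinguished such component giving a flat family of maximal destabilizing subsheaves. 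Taking the closure of this component and normalizing, one obtains a proper bimeromorphic morphism $T\to S$ over which $HN_1(E)\subset E_T$ is defined with flat quotient; here properness of $T\to S$ is exactly what compactness of the relevant Douady components furnishes, and bimeromorphicity comes from the genericity of the construction.

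The next step is to iterate. Having produced $HN_1(E)$ flatly over $T_1\to S$ with flat quotient $E_{T_1}/HN_1(E)$, I would apply the same construction to this quotient, which again has pure general members of strictly smaller slope vector, obtaining $T_2\to T_1$ and the next step of the filtration. Since the slope vectors of the factors strictly decrease and the lengths of the fibrewise filtrations are uniformly bounded (again by boundedness and local constancy of degrees), this process terminates after finitely many steps, and composing the modifications yields the desired $T\to S$ together with the full flat filtration $HN_\bullet(E)$ inducing the absolute filtrations fibrewise over the dense open locus.

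For the universal property, suppose $f\colon T'\to S$ and a filtration $F_\bullet$ of $E_{T'}$ with flat factors agreeing fibrewise with the absolute HN filtration at some point $s$. By local constancy of the degrees in flat families, the numerical invariants of $F_\bullet$ are forced to match those of $HN_\bullet(E)$, and by the fibrewise uniqueness of the absolute HN filtration the subsheaf $F_1\subset E_{T'}$ defines, near each point lying over $s$, a classifying map into the same distinguished Douady component used to construct $T$; this produces the factorization $T'\to T$ and the identification $F_\bullet=HN_\bullet(E)_{T'}$ by the universal property of the relative Douady space. The main obstacle I expect is establishing properness of $T\to S$ in the analytic setting: one must rule out destabilizing subsheaves of the fibres escaping to the boundary or failing to specialize flatly, which is precisely where the analytic boundedness theorem of \cite{Toma16} (rather than projective GIT-type arguments) does the essential work and must be invoked with care.
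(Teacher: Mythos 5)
The paper does not actually prove this theorem: it is quoted from the reference \cite{TomLimII} (``Bounded sets of sheaves on K\"ahler manifolds II'', listed as in preparation), with the remark that it is obtained ``using the techniques of \cite{Toma16}''. So there is no in-paper proof to compare your proposal against, and I can only assess your sketch on its own terms.

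Your outline follows the standard strategy from the projective case (\cite[Section 2.3]{HL}): construct the maximal destabilizing subsheaf generically, spread it out via a component of the relative Douady space, take a proper bimeromorphic $T\to S$, iterate on the quotient, and deduce the universal property from fibrewise uniqueness. This is almost certainly the intended route, and your identification of the crux --- properness over $S$ of the relevant Douady component, which in the analytic setting must come from the boundedness theory of \cite{Toma16} rather than from projectivity of the Quot scheme --- is accurate. However, that crux is precisely what your proposal acknowledges rather than resolves, and it hides two nontrivial sub-problems: (i) one must first show that the maximal destabilizing subsheaves $HN_1(E_s)$ form a bounded set, which requires an analytic analogue of the Grothendieck boundedness lemma for subsheaves of bounded slope (this is where the hypothesis that the degree system comes from strictly positive $\partial\bar\partial$-closed forms enters, via compactness of cycle-space components and Bishop's theorem); and (ii) even granting boundedness, one needs that the closure of the generic section inside the Douady space is proper over $S$ and that its fibres still consist of saturated subsheaves of the correct slope, which is a semicontinuity statement, not a formality. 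A smaller imprecision: the HN type is not constant over the whole pure locus, only over a dense Zariski open subset; your later restriction to $S^\circ$ makes this harmless, but the earlier phrasing overstates what local constancy of the degrees gives you. In short, the skeleton is right, but the analytic content that distinguishes this theorem from its projective ancestor is exactly the part left as a black box.
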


A consequence of this theorem is the fact that semistability is a Zariski open property in flat families of sheaves.  

Another way to look at the relative Harder-Narasimhan filtration is to consider its direct image over $X\times S$ and the filtration which this induces on $E$.  In particular, if $E$ is a family as in the theorem's statement and whose general fibres are not semistable and  if $(f:T\to S, HN_\bullet(E))$ is the relative Harder-Narasimhan filtration of $E$, then the fibres over general points $s\in S$ of the image $F_1$ of the composition of sheaf homomorphisms 
$$(\id_X\times f)_*(HN_1(E)) \to (\id_X\times f)_*(E_T)\to E$$
coincide with $HN_1(E_s)$. We shall call the sheaf $F_1$ the {\em  relative maximal destabilizing sheaf} of $E$.


\section{One dimensional families}\label{section:one-dimensional-case}
In this section we deal with the case of one-dimensional families in its analytic formulation. The attentive reader will be able to translate the argument in terms of families over the spectrum of a discrete valuation ring, when the base space is algebraic, with some care however when applying Artin approximation. 
We will denote by $\D$ the open unit disc in $\C$ and write $\D^*:=\D\setminus\{0\}$. 

\begin{thm}\label{thm:dimension one} 
Let  $X$ be a compact complex manifold endowed with a $(d,d')$-degree system and let $F$ be a $\D$-flat family of $d$-dimensional sheaves on $X$.  Suppose that for $s\in\D\setminus\{0\}$ the fibres $F_s$ are semistable. Then there exists a coherent subsheaf $F'\subset F$ coinciding with $F$ over $\D\setminus\{0\}$ and such that the fibre $F'_0$ over zero is also semistable.
\end{thm}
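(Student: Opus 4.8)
The plan is to run Langton's iterative scheme of elementary modifications along the special fibre, substituting his two uses of rationality and of properness of the Quot scheme by, respectively, property (2) of the degree system and an application of Artin approximation. Let $t$ be a coordinate centred at $0\in\D$. If $F_0$ is already semistable we take $F'=F$, so assume it is not. Starting from $F^{(0)}:=F$ I would inductively define a decreasing chain $\cdots\subset F^{(n+1)}\subset F^{(n)}\subset\cdots\subset F^{(0)}=F$ of $\D$-flat families, each agreeing with $F$ over $\D^*$, by setting $B^{(n)}:=HN_1(F^{(n)}_0)$, $Q^{(n)}:=F^{(n)}_0/B^{(n)}$ and $F^{(n+1)}:=\ker(F^{(n)}\twoheadrightarrow F^{(n)}_0\to Q^{(n)})$. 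A local $\mathrm{Tor}$-computation against $\cO/(t)$ shows that $F^{(n+1)}$ is again $\D$-flat, that $tF^{(n)}\subset F^{(n+1)}\subset F^{(n)}$, and that the new special fibre fits into
$$0\to Q^{(n)}\to F^{(n+1)}_0\to B^{(n)}\to 0 .$$
In particular $[F^{(n)}_0]=[F_0]$ in $K_0(X)$ for every $n$, so all special fibres carry the same numerical invariants while the destabilizing subsheaf of $F^{(n)}_0$ becomes a subobject of $F^{(n+1)}_0$.

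I would then follow the destabilizing slope. Using the exact sequence above, the inequality $\mu(Q^{(n)})<\mu(B^{(n)})$ and the maximality of $B^{(n+1)}$, one shows that the slope vectors $\mu(B^{(n)})\in\R^{d-d'+1}$ are non-increasing for the lexicographic order. This is where the combinatorial input is needed. The $B^{(n)}$ are subsheaves of sheaves of fixed class $[F_0]$ and their leading degrees are bounded by $\deg_d(F_0)$, so they form a bounded family in the sense of \cite{Toma16}; property (2) of the degree system then forces each $\deg_p$, $d'\le p\le d$, to assume only finitely many values on them, whence the slope vectors $\mu(B^{(n)})$ lie in a finite subset of $\R^{d-d'+1}$. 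A non-increasing sequence in a finite totally ordered set is eventually constant, so $\mu(B^{(n)})=\mu_\infty$ for all $n\ge n_0$. Since non-termination means $B^{(n)}\subsetneq F^{(n)}_0$, i.e. $\mu_\infty>\mu(F^{(n)}_0)=\mu(F_0)$, the limiting slope strictly exceeds the slope of the special fibre. This step is precisely what replaces the discreteness of slopes available in the rational projective setting.

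Assume for contradiction that no $F^{(n)}_0$ is semistable. In the stable range $n\ge n_0$ the subsheaves $B^{(n)}$ have constant numerical type and, by a $t$-adic limit argument along the chain $(F^{(n)})$, assemble into a coherent subsheaf $\widehat{B}$ of the formal completion $F\,\widehat{\otimes}\,\C[[t]]$ which is flat over $\C[[t]]$ and whose generic slope equals $\mu_\infty$; thus $\widehat{B}$ would be a formal destabilizing subsheaf of the generic fibre. I would encode the existence of such a subsheaf as a point of the relative Douady functor of quotients of $F$ with the prescribed Hilbert data: the formal subsheaf provides a formal point of this functor, and Artin approximation upgrades it to a convergent one, i.e. to an honest coherent subsheaf $B_{\mathrm{an}}\subset F|_{\D_\varepsilon}$, flat over a small disc $\D_\varepsilon$, of constant slope $\mu_\infty$. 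Since $\mu(F_s)\to\mu(F_0)<\mu_\infty$ as $s\to 0$, restricting to a generic $s\in\D_\varepsilon\setminus\{0\}$ yields $\mu(B_{\mathrm{an}}|_s)=\mu_\infty>\mu(F_s)$, destabilizing $F_s$ and contradicting the hypothesis. Hence the process terminates: for some $n$ the fibre $F^{(n)}_0$ is semistable, and $F':=F^{(n)}$ coincides with $F$ over $\D^*$ and has semistable fibre over $0$, as required.

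I expect the two genuine obstacles to be exactly the two substitutions. First, turning ``bounded family'' into finiteness of the values of $\mu(B^{(n)})$ relies on the boundedness results of \cite{Toma16} combined with property (2), and is the device that compensates for the possible irrationality of the polarization, where no discreteness of slopes is at hand. Second, the passage from the stabilized tower to an actual destabilizing subsheaf over a punctured disc must be arranged so as to circumvent the failure of properness of the relative Douady space in the analytic category; this is what the formal-solution/Artin-approximation mechanism achieves, and the delicate point is to formulate the relevant analytic system whose formal solvability is guaranteed by the stabilized chain.
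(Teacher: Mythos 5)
Your overall architecture is the right one and matches the paper's: Langton's chain of elementary modifications $F^{(n+1)}=\ker(F^{(n)}\to F^{(n)}_0/B^{(n)})$, monotonicity of the destabilizing slopes, and an endgame in which the stabilized tower produces a formal point of the relative Douady space that Artin approximation converts into an actual destabilizing quotient of nearby fibres, contradicting semistability over $\D^*$. You also correctly locate the two places where the classical argument breaks. However, your resolution of the first difficulty --- forcing the sequence $\mu(B^{(n)})$ to be eventually constant --- has a genuine gap. Property (2) of Definition \ref{def:degrees} is a statement about \emph{positive $p$-cycles}: it says that $\deg_p$ takes finitely many values on a set of positive $p$-cycles on which it is bounded. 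For a $d$-dimensional sheaf $B^{(n)}$ and $p<d$, the number $\deg_p(B^{(n)})$ is just the value of a group homomorphism on $[B^{(n)}]\in K_0(X)$; it is not the degree of any positive $p$-cycle (property (1) identifies $\deg_p([E])$ with $\deg_p(\cycle_p(E))$ only for $E\in\Coh_p(X)$), so property (2) cannot be applied to the collection $\{B^{(n)}\}$ directly. Moreover, your assertion that the $B^{(n)}$ form a bounded family in the sense of \cite{Toma16} is not justified under the hypotheses of the theorem, which assume only a $(d,d')$-degree system (properties (1)--(3)), not the strong systems induced by $\partial\bar\partial$-closed forms that the boundedness machinery requires; and even granting boundedness, finiteness of the values of $\deg_p$ on a bounded family of sheaves is not among the listed axioms.

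The paper closes this gap with a device you do not have: a descending induction on $\delta$ (so that at each stage only the single coordinate $\deg_\delta/\deg_d$ of the slope vector can vary), together with a recursive decomposition of each $F^{(n)}_0$ into a collection $\cC^n$ of $2^{n+1}$ building blocks obtained by repeatedly cutting along the exact sequences $0\to B^n\to F^n_0\to G^n\to 0$ and $0\to G^n\to F^{n+1}_0\to B^n\to 0$. Since the $d$-cycles of these blocks are positive, dominated by $\cycle_d(F_0)$, and sum to it, they stabilize for $n\ge n_0$; one then writes $\deg_\delta(B^{n})=\deg_\delta(F_0)-\deg_\delta\bigl(\sum_E\cycle_\delta(E/E')\bigr)$, where the $E/E'$ lie in $\Coh_\delta(X)$, so that the relevant quantity really is the degree of a bounded positive $\delta$-cycle. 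Only at that point does property (2) legitimately yield finitely many values and hence stationarity of the monotone sequence $\mu_{d,\delta}(B^{n})$ (via a pigeonhole passage to a subsequence cutting the same subcollection of blocks). Without some substitute for this crumbling argument, your proof does not establish the stationarity on which the entire Artin-approximation endgame depends. The remainder of your outline (splitting of the stabilized extensions, flatness of $Q^n=F/F^{(n)}$ over $\cO_\D/\gm^n$, formal section of the Douady space, approximation) is in substance the paper's, though you would still need to supply the intermediate steps showing $A^{n+1}=0$ and the stabilization of the ascending chain $G^n\subset (G^0)^{DD}$ before the formal object can be assembled.
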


Before starting the proof, note that $\cO_{\D,s}$ is a principal ideal domain for any $s\in\D$, so for an $\cO_{\D,s}$-module being flat boils down to being torsion-free. Thus, since $F$ has no $\D$-torsion, any coherent subsheaf of $F$ continues to be flat over $\D$.

\begin{proof}
The proof follows the line of \cite[Section 2B]{HL} with an essential expansion due to the lack of discreteness of the degree functions in our set-up. A smaller change appears at the end where we avoid the issue on the use of the properness of the relative Douady space and replace it by a short argument based on Artin approximation.

For any integer $\delta\in[d',d]$ we will consider semistability with respect to the $(d,\delta)$-degree system $(\deg_d,...,\deg_\delta)$ obtained by restricting the given $(d,d')$-degree system on $X$. We will call a semistable sheaf with respect to such a restricted system shortly {\em $(d,\delta)$-semistable}.
Note that a sheaf is $(d,d)$-semistable if and only if it is pure of dimension $d$. Thus for $d'=d$  Theorem \ref{thm:dimension one} just says that if $F$ is flat and with pure fibres over $\D^*$, then a subsheaf $F'\subset F$ exists with $F'_{\D^*}=F_{\D^*}$ and $F'_0$ pure as well. This is the case $\delta=d$ of the following Claim and a proof is suggested in \cite[Exercise 2.B.2]{HL}. Since this special case works under weaker hypotheses and has an easier proof we provide a separate statement for it as Proposition \ref{prop:dimension one}.

\begin{claim}\label{Claim}
Let $X$, $F$ be as in the theorem's statement and $d\ge\delta \ge d'$. Then, if $F_0$ is moreover $(d,\delta+1)$-semistable, there exists  a subsheaf $F'\subset F$  with $F'_{\D^*}=F_{\D^*}$ and  $(d,\delta)$-semistable fibre $F'_0$ over zero. (Here when $\delta=d$ we think of the $(d,d+1)$-semistability condition as being void, that is all sheaves $F_0$ are automatically $(d,d+1)$-semistable.)
\end{claim}

The theorem will follow from this Claim by descending induction on $\delta$. 
 
We now proceed to the proof of the Claim. The case $\delta=d$ has already been discussed, so we will work under the hypothesis $d>\delta\ge d'$. Assuming by contradiction that the Claim is false we first construct an infinite descending filtration $F=F^0\supset F^1 \supset F^2 \supset F^3 \supset...$ such that $F^n_{\D^*}= F_{\D^*}$ and $F^n_0$ not $(d,\delta)$-semistable for every $n\in\N$, as follows: supposing that $F^n$ has already been constructed, let $B^n\subset F_0^n$ be the maximal $(d,\delta)$-destabilizing subsheaf of $F_0^n$, let $G^n:=F_0^n/B^n$ and let $F^{n+1}$ be the kernel of the composition $F^n\to F_0^n\to G^n$. 
These sheaves are related through two exact sequences
\begin{equation}\label{seq:1}
0\to B^n\to F_0^n\to G^n\to 0
\end{equation}
\begin{equation}\label{seq:2}
0\to G^n\to F_0^{n+1}\to B^n\to 0.
\end{equation}
To explain the second we will tensorize over $\cO_\D$ the exact sequence 
$0\to F^{n+1}\to F^{n}\to G^n\to 0$ by 
$0\to \gm\to \cO_\D\to\cO_\D/\gm\to0$, where $\gm$ is the ideal sheaf of the origin in $\D$, to get the following  commutative diagram with exact rows and columns. 
\begin{equation}
\label{diagr:snake}
\begin{gathered}
\xymatrix{ 
 & & &0\ar[d] & \\
 & 0 \ar[d] & 0 \ar[d]& Tor_1^{\cO_\D}(G^n,\cO_\D/\gm) \ar[d]& \\ 
0 \ar[r]& F^{n+1}  \otimes_\D\gm \ar[r]\ar[d] & F^{n+1} \ar[d]\ar[r] & F_0^{n+1}\ar[r] \ar[d] & 0 \\
0 \ar[r]&  F^{n}\otimes_\D\gm \ar[r]\ar[d] & F^{n} \ar[d]\ar[r] & F_0^{n}\ar[r] \ar[d] & 0 \\
 &  G^n\otimes_\D\gm \ar[r]^0\ar[d] & G^n \ar[d]\ar[r]^{\cong} & G^n\otimes_\D\cO_\D/\gm\ar[r] \ar[d] & 0 \\
 & 0  & 0  & 0 & 
}
\end{gathered}
\end{equation}
Then using the Snake Lemma and the fact that $\gm\cong\cO_\D$, we see that the right vertical exact sequence decomposes into two short exact sequence which are precisely the sequences \eqref{seq:2} and \eqref{seq:1}. These exact sequences appear also in Langton's paper; cf.  \cite[Appendix C]{MasloSepp} for a detailed account of Langton's proof. 

Combining the exact sequences \eqref{seq:1}, \eqref{seq:2} we get a self explanatory commutative diagram with exact rows and columns:
\begin{equation}
\label{diagr:3X3}
\begin{gathered}
\xymatrix{ 
 & 0 \ar[d] & 0 \ar[d] & 0 \ar[d]& \\ 
0 \ar[r]& A^{n+1} \ar[r]\ar[d] & B^{n+1} \ar[d]\ar[r] & C^{n+1}\ar[r] \ar[d] & 0 \\
0 \ar[r]& G^{n} \ar[r]\ar[d] & F^{n+1}_0 \ar[d]\ar[r] & B^{n}\ar[r] \ar[d] & 0 \\
0 \ar[r] & K^{n+1} \ar[r]\ar[d] & G^{n+1} \ar[d]\ar[r] & L^{n+1}\ar[r] \ar[d] & 0 \\
 & 0  & 0  & 0 & 
}
\end{gathered}
\end{equation}
By continuity $\deg_p(F_0^n)=\deg_p(F_0)$ for all $n\in\N$ and $p$ with $d\ge p\ge d'$. 
If $C^{n+1}\neq 0$, then $C^{n+1}$ is pure $d$-dimensional as a  submodule of the $(d,\delta)$-semistable module  $B^{n}$ and 
\begin{equation}\label{ineq:1}
\mu_{d,\delta}(B^{n+1})\le \mu_{d,\delta}(C^{n+1})\le \mu_{d,\delta}(B^{n}).
\end{equation}
If 
$C^{n+1}=0$, we have 
\begin{equation}\label{ineq:2}
\mu_{d,\delta}(B^{n+1})=\mu_{d,\delta}(A^{n+1})\le \mu_{d,\delta}(G^{n})<\mu_{d,\delta}(B^{n})
\end{equation}
by the choice of $B^n\subset F^n_0$. We also have $\mu_{d,\delta}(B^{n})>\mu_{d,\delta}(F_0)$, hence the sequence $(\mu_{d,\delta}(B^{n}))_n$ is descending and bounded from below. In fact only the $\frac{\deg_\delta}{\deg_d}$ may vary on this sequence. At this point if we knew the degree functions to be discrete, we would conclude that the sequence $(\mu_{d,\delta}(B^{n}))_n$ is eventually stationary. In our situation we need to construct further  objects in order to get the eventually stationary behaviour of this sequence. 

The idea is to introduce at each formation step of the subsheaves $F^n$ a decomposition of $F_0^n$ into smaller and smaller building blocks as $n$ increases. We will get at each step a collection $\cC^n$ of $2^{n+1}$ building blocks. We show that this crumbling process must eventually stop, leading to the desired eventually stationary behaviour. We indicate below the first three steps of this process.

Step 0: The decomposition is given by the exact sequence \ref{seq:1} for $n=0$. We set $\cC^0:=(B^0,G^0)$.

Step 1: The case $n=1$ for the sequences  \eqref{seq:1} and \eqref{seq:2} lead to a diagram of type \eqref{diagr:3X3}. We set 
$\cC^1:=(A^1, C^1, K^1, L^1 )$. We may view $\cC^1$ as the result of cutting $\cC^0$ into pieces by using \eqref{seq:2}.
The reconstruction of $B^0,B^1,G^0,G^1,F_0^0,F_0^1$ is possible starting from $\cC^1$.

Step 2: We use again \eqref{seq:2} this time for $n=2$ to cut each component of $\cC^1$ into two further pieces. These give the eight vertices denoted by $\ast$ in the following diagram. We allow to count isomorphic components of $\cC^n$ several times if they appear at different places in the decomposition process. Note that $B^2$ cuts off sub-objects of components of $\cC^1$; these appear represented in the vertical plane containing $B^2$ (the back plane).
$$
\xymatrix{ 
&  & \ast\ar[ddd]\ar[ddl] &  &  & & & & \ast\ar[ddd]\ar[ddl]\\
& & & & & & & & \\
 & K^1\ar[ddd]\ar[ddl] &  &  & & & & A^1\ar[ddd]\ar[ddl]&\\
&  & A^2\ar[ddd]\ar[ddl]\ar[rrr] &  &  &B^2\ar[rrr]\ar[ddl] & & & C^2\ar[ddd]\ar[ddl]\\
\ast\ar[ddd]& & & & & & \ast\ar[ddd]& & \\
 & G^1\ar[ddd]\ar[ddl]\ar[rrr] &  &  &F_0^2\ar[rrr]\ar[ddl] & & & B^1\ar[ddd]\ar[ddl]&\\
 & & \ast\ar[ddl]& & & & & & \ast\ar[ddl]\\
 K^2\ar[ddd]\ar[rrr] &  &  &G^2\ar[rrr] & & & L^2\ar[ddd]& &\\
& L^1\ar[ddl] &  &  & & & & C^1\ar[ddl]&\\
& & & & & & & & \\
\ast& & & & & & \ast& & \\
}
$$

For $n>2$ the elements of $\cC^n$ will appear as vertices of the $n+1$-dimensional hypercube in a diagram constructed in a recursive manner as above. 

Note that modulo $\Coh_\delta$ all components of $\cC^n$ vanish or are $(d,\delta+1)$-semistable. Note also that for any component $E$ of $\cC^n$, we have $0\le \cycle_d(E)\le\cycle_d(F_0)$. 
In fact the sum of these cycles over all components of $\cC^n$ equals $\cycle_d(F_0)$. Thus there exists some threshold $n_0\in \N$, such that the set of $d$-cycles of components of $\cC^n$ is constant for $n\ge n_0$. 
For $n>n_0$ the decomposition into building blocks from $\cC^n$ of  components $E$ of $\cC^{n_0}$ shows that $B^n$ cuts off sub-objects $E'$ in such components $E$ and these sub-objects are to be used in the reconstruction of $B^n$ itself; in fact they will be the quotients of a suitable filtration of $B^n$. If $E$ is $d$-dimensional then such a sub-object either vanishes or is pure $d$-dimensional since $B^n$ is pure $d$-dimensional itself. In this second case $\cycle_d(E')=\cycle_d(E)$ by our assumption on   $\cC^{n_0}$, and in particular $E/E'\in\Coh_\delta(X)$ and 
$\deg_\delta(E/E')=\deg_\delta(\cycle_\delta(E/E'))\ge 0$. If $E$ is not $d$-dimensional then $E$ has at most dimension $\delta$ and we have   $\deg_\delta(E/E')=\deg_\delta(\cycle_\delta(E/E')\ge 0$ in this case too. Consider now a subsequence $(B^{n_k})_{k>1}$ of $(B^n)_{n\ge n_0}$ such that all its terms cut off non-zero sub-objects on the same sub-collection of $d$-dimensional components of $\cC^{n_0}$. It follows that the sequence $(\cycle_d(B^{n_k}))_{k>1}$ is constant. On the other hand using the above notations and taking sums over all components $E$ of $\cC^{n_0}$ we find
$\deg_\delta(B^{n_k})=\sum_E \deg_\delta(E')=\sum_E\deg_\delta(E)-\sum_E\deg_\delta(E/E')=\deg_\delta(F_0)-\sum_E\deg_\delta(\cycle_\delta(E/E'))=\deg_\delta(F_0)-\deg_\delta(\sum_E\cycle_\delta(E/E'))$. It follows that the degrees in dimension $\delta$ of the cycles $\sum_E\cycle_\delta(E/E')$ are bounded and thus by our assumption on the degree functions they may attain only a finite number of values when $k$ varies. This implies that the sequence $(\mu_{d,\delta}(B^{n_k}))_k$ is eventually stationary,  hence also $(\mu_{d,\delta}(B^{n}))_n$ is eventually stationary.

We continue now the proof of the Claim following again \cite{HL}. By the above we may assume that the sequence $(\mu_{d,\delta}(B^{n}))_n$ is even constant. Then the inequalities \eqref{ineq:2} show that $C^{n+1}\neq0$ for all $n$ and using \eqref{ineq:1} we further find  
$\mu_{d,\delta}(B^{n+1})= \mu_{d,\delta}(C^{n+1})= \mu_{d,\delta}(B^{n})$. Thus either $A^{n+1}$ vanishes or it is $(d,\delta)$-semistable with $\mu_{d,\delta}(A^{n+1})= \mu_{d,\delta}(B^{n+1})= \mu_{d,\delta}(C^{n+1})$. In the latter case  we would have $\mu_{d,\delta}(B^{n+1})=\mu_{d,\delta}(A^{n+1})\le \mu_{d,\delta}(G^{n})<\mu_{d,\delta}(B^{n})$ which is impossible. Hence and from diagram \eqref{diagr:3X3} we get $A^{n+1}=0$, $B^{n+1}\cong C^{n+1}\subset 
B^{n}$, $G^n\cong K^{n+1}\subset G^{n+1}$ for all $n\in\N$.
Moreover since the sequence $(\cycle_d(B^n))_n$ is eventually stationary we may as well suppose that it is constant. It follows that $L^{n+1}\in\Coh_{\delta-1}$. 
In particular the ascending sequence of pure $d$-dimensional sheaves $G^n$ is constant modulo $\Coh_{d-2}(X)$, thus their reflexive hulls $(G^n)^{DD}$ are all the same and in particular the ascending sequence $(G^n)_n$ of  subsheaves of    $(G^0)^{DD}$ is eventually stationary. We assume again for simplicity that this sequence too is constant. So the central vertical and horizontal exact sequences of diagram \eqref{diagr:3X3} are split. 
We will write from now on  $G:=G^n$, $B:=B^n$ and $Q^n:=F/F^n$. Via the splittings $F_0^n\cong B\oplus G$  the morphisms $F_0^{n+1}\to F_0^n$ from diagram \eqref{diagr:snake} become compositions $B\oplus G\to B\to B\oplus G$ of the natural projections and injections. Hence the cokernel of the composition $F_0^n\to F^{n-1}_0\to...\to F_0^0$ is isomorphic to $G$. All in all we obtain that $Q_0^n\cong G$. From the diagram
$$
\xymatrix{  
0 \ar[r] & F^{n+1} \ar[r]\ar[d] & F\ar[r]\ar[d] & Q^{n+1}\ar[r] \ar[d] & 0 \\
0 \ar[r]& F^{n} \ar[r] & F\ar[r] & Q^{n}\ar[r]  & 0 
}
$$
we see that $\Ker(Q^{n+1}\to Q^n)\cong\Coker(F^{n+1}\to F^n)\cong G$. We will next show by induction on $n$ that $Q^n$ is flat over $\cO_\C/\gm^n$. 
The assertion is clear for $n=1$ so we assume it true for $n$ and start proving it for $n+1$. For this we tensorize the exact sequence $$0\to G\to Q^{n+1}\to Q^n\to 0$$ by
$$0\to \gm/\gm^{n+1}\to\cO_\D/\gm^{n+1}\to \cO_\D/\gm\to0$$ over $\cO_\D$ to get
$$
\begin{gathered}
\xymatrix{  
 & 0 \ar[d] & 0 \ar[d] & 0 \ar[d]& \\ 
0 \ar[r]& G\otimes\gm/\gm^{n+1} \ar[r]\ar[d] & G\otimes\cO_\D/\gm^{n+1}  \ar[d]\ar[r] & G\otimes\cO_\D/\gm\ar[r] \ar[d] & 0 \\
& Q^{n+1}\otimes\gm/\gm^{n+1}  \ar[r]\ar[d] & Q^{n+1}\otimes\cO_\D/\gm^{n+1}  \ar[d]\ar[r] & Q^{n+1}\otimes\cO_\D/\gm\ar[r] \ar[d] & 0 \\
0\ar[r] &  Q^n\otimes\gm/\gm^{n+1}  \ar[r]\ar[d] & Q^n \otimes\cO_\D/\gm^{n+1} \ar[d]\ar[r] & Q^n\otimes\cO_\D/\gm\ar[r] \ar[d] & 0 \\
 & 0  & 0  & 0 & 
}
\end{gathered}
$$
where the exactness of the first two vertical sequences follows from the induction hypothesis and from the fact that $\gm/\gm^{n+1} $ is a $\cO_\C/\gm^n$-module. Thus the morphism 
$Q^{n+1}\otimes\gm/\gm^{n+1}  \to Q^{n+1}\otimes\cO_\D/\gm^{n+1}$ is injective  and the local flatness criterion \cite[Lemma 2.1.3]{HL} shows that $Q^{n+1}$ is a flat $\cO_\D/\gm^{n+1}$-module. This gives a projective system of maps over $\D$ from the spaces $\Spec(\cO_\D/\gm^{n})$ to the relative Douady space $D_{F/X\times\D/\D}$ over $\D$ of quotients of $F$ which may be seen as a formal section of the natural projection of germs $(D_{F/X\times\D/\D}, F\twoheadrightarrow Q^1)\to (\D,0)$. Such a section admits an analytic approximation to order one by Artin approximation \cite[Theorem 1.4.ii]{Artin68} showing that the projection $(D_{F/X\times\D/\D}, F\twoheadrightarrow Q^1)\to (\D,0)$ is surjective. 
This contradicts the semistability of the fibres of $F$ over $\D^*$.
\end{proof}

\begin{prop}\label{prop:dimension one} 
Let $X$ be a compact complex manifold and
let $F$ be a $\D$-flat family of $d$-dimensional coherent sheaves on $X$ whose fibres over $\D\setminus\{0\}$ are pure of dimension $d$. Then there exists a coherent subsheaf $F'\subset F$ coinciding with $F$ over $\D\setminus\{0\}$ and such that its fibre $F'_0$ over $0$ is also pure.
\end{prop}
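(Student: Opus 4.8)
The plan is to follow the same elementary-modification scheme as in the proof of Claim \ref{Claim}, specialised to $\delta=d$, but to replace the degree-function bookkeeping (which there compensated for the lack of discreteness) by the genuine discreteness of cycle multiplicities, so that no degree system is needed. Arguing by contradiction, I would assume that every subsheaf $F'\subset F$ with $F'_{\D\setminus\{0\}}=F_{\D\setminus\{0\}}$ has non-pure central fibre and build a descending chain $F=F^0\supset F^1\supset\cdots$ with $F^n_{\D\setminus\{0\}}=F_{\D\setminus\{0\}}$ and $F^n_0$ non-pure, exactly as before: let $B^n:=T(F^n_0)$ be the maximal subsheaf of $F^n_0$ of dimension $\le d-1$, put $G^n:=F^n_0/B^n$ (pure of dimension $d$) and $F^{n+1}:=\Ker(F^n\to F^n_0\to G^n)$. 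These $F^n$ stay $\D$-flat by the remark preceding the theorem, and the snake-lemma diagram \ref{diagr:snake} and the $3\times3$ diagram \ref{diagr:3X3} are available verbatim.

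The first simplification, special to the pure case, is that here $A^{n+1}=B^{n+1}\cap G^n$ is a subsheaf both of the torsion sheaf $B^{n+1}$ and of the pure sheaf $G^n$, hence it vanishes \emph{automatically}, with no slope hypothesis needed. Thus for every $n$ the diagram \ref{diagr:3X3} already yields $B^{n+1}\cong C^{n+1}\subset B^n$ and $G^n\cong K^{n+1}\subset G^{n+1}$, so that $(B^n)_n$ is, up to isomorphism, a descending chain of subsheaves and $(G^n)_n$ an ascending chain, sharing the common quotient $L^{n+1}\cong B^n/B^{n+1}\cong G^{n+1}/G^n$.

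The main point — the analogue of Langton's ``the Hilbert polynomial is discrete'' step — is to prove termination using cycles, and this is the step I expect to be the principal obstacle. First, since $\Supp B^{n+1}\subseteq\Supp B^n\subseteq\Supp B^0$ and $\cycle_{d-1}(B^{n+1})\le\cycle_{d-1}(B^n)$, the decreasing sequence of effective cycles $\cycle_{d-1}(B^n)$ stabilises, say for $n\ge n_1$; from then on $L^{n+1}$ has trivial $(d-1)$-cycle, i.e. $L^{n+1}\in\Coh_{d-2}(X)$. Since $\Coh_{d-2}(X)$ is closed under extensions, all $G^n$ with $n\ge n_1$ then have one and the same reflexive hull $\mathcal G:=(G^{n_1})^{DD}$ and embed into it, so $\cycle_p(G^n)\le\cycle_p(\mathcal G)$ for every $p$; being increasing in $n$ and bounded above, each $\cycle_p(G^n)$ stabilises, and since cycles add in short exact sequences and a nonzero coherent sheaf has nonzero cycle in some dimension, the chain $(G^n)$ itself becomes constant, say equal to $G$ for $n\ge n_2$. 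Then $L^{n+1}=0$, whence $B^{n+1}\cong B^n=:B$ is constant too, the composite $B^{n+1}\hookrightarrow F^{n+1}_0\twoheadrightarrow B^n$ is an isomorphism, and so the sequences \ref{seq:1}, \ref{seq:2} split and $F^n_0\cong G\oplus B$. Here the discreteness of cycle multiplicities together with the standard properties of the reflexive hull (see \cite[\S1.1]{HL}) do the work that rational Hilbert polynomials do in the algebraic case, and that the crumbling argument does for $\delta<d$.

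Once the data are constant I would finish exactly as in the Claim. The splittings identify the transition maps $F^{n+1}_0\to F^n_0$ of diagram \ref{diagr:snake} with $B\oplus G\to B\hookrightarrow B\oplus G$, so $Q^n_0\cong G$ where $Q^n:=F/F^n$, and from $0\to G\to Q^{n+1}\to Q^n\to0$ the local flatness criterion \cite[Lemma 2.1.3]{HL} shows inductively that $Q^n$ is flat over $\cO_\D/\gm^n$. This produces a formal section of $(D_{F/X\times\D/\D},F\twoheadrightarrow Q^1)\to(\D,0)$ with $Q^1\cong F_0/T(F_0)$, which by Artin approximation \cite[Theorem 1.4.ii]{Artin68} is realised by an honest $\D$-flat quotient $F\twoheadrightarrow Q$ with $Q_0\cong F_0/T(F_0)$. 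For $s\ne0$ the fibre map $F_s\to Q_s$ is surjective with $\cycle_d(Q_s)=\cycle_d(Q_0)=\cycle_d(F_0)=\cycle_d(F_s)$, so its kernel has dimension $\le d-1$ inside the pure sheaf $F_s$ and hence vanishes; thus $\Ker(F\to Q)$ is supported over $0$ and, $F$ being $\D$-flat and hence $\gm$-torsion-free, must be zero. Then $F\cong Q$ and $F_0\cong F_0/T(F_0)$ is pure, contradicting the choice of $F$.
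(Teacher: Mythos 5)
Your proposal follows essentially the same route as the paper's proof: the paper runs the construction of Theorem \ref{thm:dimension one} with $B^n:=T_{d-1}(F_0^n)$, notes that diagram \ref{diagr:3X3} degenerates (your observation that $A^{n+1}=0$ automatically, being a subsheaf of both a sheaf of dimension $\le d-1$ and a pure $d$-dimensional sheaf, is exactly what diagram \ref{diagr:3X3prop} records), uses the stabilisation of the decreasing cycles $\cycle_{d-1}(B^n)$ to force $L^{n+1}\in\Coh_{d-2}(X)$, concludes that the $G^n$ form an ascending chain inside a fixed reflexive hull which must be stationary, and then finishes with the $Q^n$-flatness induction, Artin approximation, and a contradiction to purity. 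Your endgame, spelling out why the resulting analytic quotient $F\twoheadrightarrow Q$ contradicts purity of the fibres $F_s$ for $s\neq 0$, is a correct elaboration of what the paper leaves implicit.

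The one place where your justification is not quite right is the stabilisation of the chain $(G^n)_n$: you argue via "$\cycle_p(G^n)\le\cycle_p(\mathcal G)$ for every $p$" and additivity of cycles in short exact sequences, but $\cycle_p$ of a sheaf of dimension strictly greater than $p$ is not additive (e.g.\ $0\to\mathcal I_Z\to\cO_X\to\cO_Z\to0$ for a point $Z$ in a surface has $\cycle_0(\cO_X)=0\neq\cycle_0(\cO_Z)$), so this bookkeeping does not by itself rule out an infinite strictly increasing chain with quotients of varying small dimension. The conclusion is nevertheless immediate from the ascending chain condition for coherent subsheaves of the fixed coherent sheaf $(G^{n_1})^{DD}$ on the compact space $X$, which is what the paper's phrase "the ascending sequence of subsheaves of $(G_0)^{DD}$ is stationary" is invoking; replacing your cycle argument by this standard Noetherian property closes the gap.
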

\begin{proof}
We follow the strategy of proof of Theorem \ref{thm:dimension one} but we take this time $B_n:=T_{d-1}(F_0^n)$, the maximal subsheaf of $F_0^n$ of dimension at most $d-1$, cf. \cite[Definition 1.1.4]{HL}. Then $G^n:=F_0^n/B^n$ is pure and diagram \eqref{diagr:3X3} takes the form 
\begin{equation}
\begin{gathered}
\xymatrix{ 
 &  & 0 \ar[d] & 0 \ar[d]& \\ 
&  & B^{n+1} \ar[d]\ar[r]^\cong & B^{n+1} \ar[d] &  \\
0 \ar[r]& G^{n} \ar[r]\ar[d]^\cong & F^{n+1}_0 \ar[d]\ar[r] & B^{n}\ar[r] \ar[d] & 0 \\
0 \ar[r] & G^{n} \ar[r] & G^{n+1} \ar[d]\ar[r] & L^{n+1}\ar[r] \ar[d] & 0 \\
 &   & 0  & 0 & 
}
\end{gathered}
\end{equation}

We have  inequalities for associated $(d-1)$-cycles:
$$0\le\cycle_{d-1}(B^{n+1})\le\cycle_{d-1}(B^{n}),$$
hence the sequence $(\cycle_{d-1}(B^{n}))_n$ must be eventually stationary and we may assume that $\dim(L^n)\le d-2$ for all $n$. We immediately get then that the ascending sequence of subsheaves of $(G_0)^{DD}$ is eventually stationary and as before we assume that this sequence is constant and write 
$G:=G^n$, $B:=B^n$. The rest of the proof follows ad litteram the  proof of Theorem \ref{thm:dimension one} but for its last sentence where we get a contradiction to purity instead of semistability. 
\end{proof}

\section{Families of arbitrary dimension}\label{section:arbitrary-dimension}


We now turn our attention to the case of higher dimensional parameter  spaces.  As in the previous section we give a separate purity statement. This is the content of Proposition \ref{prop:dimension>1}. 
For the main result of the section a stronger assumption on the degree functions will be made which will guarantee the existence of a relative maximal destabilizing subsheaf and in particular that semistability is a Zariski open property in flat families of coherent sheaves, see Section \ref{section:degrees}. For simplicity we will only consider smooth
parameter spaces. This is no significant restriction of generality since the property stated is valid after a birational base change.

\begin{thm}\label{thm:dimension>1} 
Let  $X$ be a compact complex manifold endowed with a strong $(d,d')$-degree system induced by a system of strictly positive $\partial\bar\partial$-closed differential forms, where $0\leq d'\leq d$. Let further $F$ be a  family over $S$ of $d$-dimensional sheaves on $X$, where $S$ is a connected smooth parameter space.  Suppose that general fibres of $F$ are  $(d,d')$-semistable and let $Z\subset S$ be the union of the non-flatness locus of $F$ with the closed analytic subset of $S$ parametrizing non-$(d,d')$-semistable sheaves. Let further $K\subset S$ be a compact subset. Then there exist a proper modification $S'\to S$ and a coherent sheaf $F'$ on $X\times S'$ such that $F'$ is flat over $S'$, all fibres of $F'$ over $K\times_SS'$ are $(d,d')$-semistable and $F'$ coincides with $F_{S'}$ over $(S\setminus Z)\times_SS'$.
\end{thm}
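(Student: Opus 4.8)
The plan is to reduce the higher-dimensional statement to the one-dimensional case (Theorem \ref{thm:dimension one}) by combining the relative Harder-Narasimhan machinery of Theorem \ref{thm:relativeHNfiltration} with a careful blow-up argument that organizes the modifications coming from individual curves through $K$. First I would use the strong degree hypothesis to invoke Theorem \ref{thm:relativeHNfiltration}, producing a relative maximal destabilizing sheaf $F_1$ whose fibres over general $s\in S$ coincide with $HN_1(F_s)$; the non-$(d,d')$-semistable locus together with the non-flatness locus is the closed analytic set $Z$, and semistability being Zariski open means $S\setminus Z$ is a dense Zariski open subset where the fibres are already semistable. The goal is then to modify $F$ over a neighbourhood of $K$ so as to replace the destabilizing behaviour along $Z$.

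The key idea is to perform, in the spirit of Langton, a sequence of elementary modifications along the divisor lying over $Z$. Concretely I would first arrange, after a proper modification $S'\to S$, that the relative maximal destabilizing sheaf extends to a genuine saturated subsheaf and that the relevant loci become divisorial; this is where the assumption that $S$ is smooth is used, so that $Z$ can be handled as a Cartier-type divisor after blowing up. Over a one-parameter disc $\D\hookrightarrow S'$ transverse to the exceptional divisor, Theorem \ref{thm:dimension one} produces a subsheaf whose special fibre is semistable, and the elementary modification performing this replacement is $F'=\ker(F_{S'}\to (F_{S'})_E\to G)$, where $E$ is the exceptional divisor and $G$ is the quotient by the relative maximal destabilizing sheaf restricted to $E$. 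The crucial point is that this single modification does not terminate in general; one must iterate, and to control the process I would use the strong (locally constant) degree functions together with the boundedness results from \cite{Toma16} to show that after finitely many modifications over the compact set $K$ the destabilizing locus no longer meets $K\times_S S'$.

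The main obstacle, and the reason $K$ is introduced and $S$ assumed smooth, is the passage from the curve case to the family case: unlike the one-dimensional situation, a single global elementary modification need not improve all fibres simultaneously, and the locus of non-semistable fibres can move as one modifies. The hard part will be proving termination of the modification procedure uniformly over the compact $K$: I would need a boundedness statement guaranteeing that the set of possible Harder-Narasimhan data arising along the process is finite, so that a suitably chosen numerical invariant (built from the locally constant degree functions $\deg_\delta$ and the $d$-cycles, as in the crumbling argument of the Claim) strictly decreases and is bounded below. The compactness of $K$ is what makes this invariant control possible, since boundedness of the relevant families of sheaves in the sense of \cite{Toma16} is only available over compact parameter regions; away from $K$ one cannot expect to terminate, which is precisely why the conclusion is stated only over $K\times_S S'$ while agreement with $F_{S'}$ is retained over the semistable locus $(S\setminus Z)\times_S S'$.
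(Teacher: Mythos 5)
Your outline matches the paper's opening moves: restrict attention to the finitely many components of $Z$ meeting $K$, flatten and make $Z$ divisorial via Hironaka \cite{Hir75}, take the relative maximal destabilizing subsheaf $B^0$ of $F_Z$, form $F^1=\Ker(F\to F_Z/B^0)$, and iterate a Langton-type elementary modification. The paper does not, however, argue disc by disc: it reruns the entire proof of Claim \ref{Claim} (the crumbling of $F_0^n$ into cycle-theoretic building blocks, the stationarity of $\mu_{d,\delta}(B^n)$, and the Artin-approximation contradiction) \emph{relatively over $Z$}, at general points of $Z$ where $B^0$ and $G^0$ are flat, using relative support cycles and the relative version of Artin approximation. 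Restricting to transverse discs, as you propose, only controls one fibre at a time and gives you no global coherent subsheaf of $F$; the gluing of the disc-wise modifications is exactly what the relative argument is designed to avoid.

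The genuine gap is in your termination argument for the outer iteration. One pass of the modification produces $F'\subset F$ whose fibres are semistable over \emph{general} points of $Z$, leaving a new bad locus $Z'$ nowhere dense in $Z$; after re-flattening, $Z'$ pulls back to a divisor again, so no naive dimension count terminates the process, and you correctly flag this as the hard part. But the invariant you propose --- something built from the locally constant degree functions, $d$-cycles and boundedness of Harder--Narasimhan data over $K$ --- is not what the paper uses and does not obviously decrease: the sheaves parametrized by $Z'$ fail semistability in ways that need bear no monotone numerical relation to those over $Z$, so there is no a priori descent of slopes or cycles across successive outer passes (the monotonicity of $\mu_{d,\delta}(B^n)$ lives entirely inside a single pass). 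The paper's mechanism is different and is the real new idea here: a \emph{badness index} $b$, defined as the maximum over components $Z_i$ of the minimal dimension of the base $B_i$ of a proper map $\pi_i:Z_i\to B_i$ whose fibres carry constant restricted families. Since $F'$ is again constant on the fibres of $\pi_i$ over $Z_i\cap Z'$, the new bad locus fibres over a \emph{proper} Zariski-closed subset of $B_i$, forcing $b'<b$; as $b$ is a non-negative integer this terminates. Also note that $K$ enters only to cut $Z$ down to finitely many components, not because boundedness in the sense of \cite{Toma16} requires compact parameter regions. Without the badness index (or a substitute for it), your proof does not close.
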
 
\begin{proof}
It is clear that we only need to deal with the finitely many irreducible components of $Z$ which meet the compact set $K$. 
In the sequel we will assume for simplicity of notation that all irreducible components of $Z$ meet $K$. 
The idea of the proof is on one hand  to try to reduce the dimension of the bad set $Z$ for a suitable subsheaf of $F$ constructed by a similar procedure to that which is used in the proof of Theorem \ref{thm:dimension one}; on the other hand it will be convenient to work in the case when $Z$ is a simple normal crossings divisor and $F$ is flat over $S$. We may reduce ourselves to this situation by repeatedly blowing up $S$ at smooth centers by Hironaka's flattening theorem \cite{Hir75}.  
Since under this requirement the dimension of $Z$ is maximal, we introduce a "badness index" $b$ in order to control the induction process in the following way: For any irreducible component $Z_i$ of $Z$ we say that a proper holomorphic map $\pi_i:Z_i\to B_i$ is {\em good (for $F$)} if the restrictions of $F$ to the fibres of $\pi_i$ are {\em isotrivial}, i.e. the fibres of these restricted families are two by two isomorphic. 
Then we set $b_i$ to be lowest possible dimension of a base $B_i$ of such a good map, and $b$ to be the maximum among the $b_i$.  If $Z$ is empty we  assign to $F$ a negative badness index, $b=-\infty$ say.
The strategy will be the following: we start with the flat family $F$ whose non-semistable locus is a divisor with simple normal crossings $Z=\sum_iZ_i$ and with badness index $b$.
From this data we produce a subfamily $F'\subset F$, with $F'_{S\setminus Z}= F_{S\setminus Z}$ and with strictly smaller bad locus. 
If this bad locus is empty, then $F'$ is the family we were looking for and we stop.
If not,  after a suitable proper modification $S'\to S$ the pull-back of this non-semistable locus becomes a divisor with simple normal crossings $D'$ on $S'$ with  badness index $b'$ for the family $F'_{S'}/\Tors_{S'}(F'_{S'})$ and such that $b'<b$. We work now with the family $F'_{S'}/\Tors_{S'}(F'_{S'})$ which may be supposed in addition to be flat over $S'$, by Hironaka's flattening theorem \cite{Hir75} again. It is clear that this process eventually stops.

The proof of the existence of the desired subfamily $F'\subset F$ on $S$ will follow the same path as in the one-dimensional case by descending induction on $\delta$. The corresponding claim will be 
\begin{claim}\label{Claim>1}
Let $X$, $F$ be as in the theorem's statement and moreover such that $F$ is flat over $S$ and $Z$ is a divisor with simple normal crossings. Suppose that   $d\ge\delta \ge d'$.  the general fibres of $F_Z$ are  $(d,\delta+1)$-semistable for some $\delta$ with  $d\ge\delta \ge d'$. Then 
there exists  a subsheaf $F'\subset F$  with $F'_{S\setminus Z}=F_{S\setminus Z}$ with $(d,\delta)$-semistable fibres over general points of $Z$ and with non-flatness locus which is  nowhere dense in $Z$. 
\end{claim}
We take first a relative maximal (semi)destabilizing subsheaf $B^0$ of $F_Z$ and  put $G^0:=F_Z/B^0$ and $F^1:=\Ker(F\to G^0)$. Note that $B^0_s$ is the maximal $(d,\delta)$-(semi)destabilizing subsheaf only for general points $s\in Z$. Flatness of $B^0$ and $G^0$ likewise only holds generically over $Z$. But we can work at such general points and see that the whole proof of Claim \ref{Claim} goes through in this new  relative setting. (We will consider of course relative support cycles for the components of the collections $\cC^n$ this time. We will also use the corresponding relative statements at each moment of the proof, such as \cite[Theorem 1.3]{Artin68} for instance.) In this way we obtain a subsheaf $F'\subset F$  with $F'_{S\setminus Z}=F_{S\setminus Z}$ and with $(d,\delta)$-semistable fibres over general points of $Z$. 
At each step of the proof the non-flatness locus of the sheaves $F^n$ is nowhere dense in $Z$. Indeed, assuming this to be true for $F^n$, we check it for $F^{n+1}$ 
by using an analogue of diagram \eqref{diagr:snake} around a point of $Z$ where both $F^n$ and $G^n$ are flat: 
\begin{equation}
\label{diagr:snake>1}
\begin{gathered}
\xymatrix{ 
 & 0 \ar[d] & 0 \ar[d] & Tor_1^{\cO_{S}}(G^n,\cO_{Z}) \ar[d]& \\ 
& F^{n+1}(-Z) \ar[r]\ar[d] & F^{n+1} \ar[d]\ar[r] & F^{n+1}_{Z}\ar[r] \ar[d] & 0 \\
0 \ar[r]&  F^n_{Z}(-Z) \ar[r]\ar[d] & F^n_{Z} \ar[d]\ar[r] & F^n_{Z}\ar[r] \ar[d] & 0 \\
Tor_1^{\cO_{Z}}(G^n,\cO_{Z}) \ar[r]^{\cong} &  G^n(-Z) \ar[r]^0\ar[d] & G^n \ar[d]\ar[r]^{\cong} & G^n\ar[r] \ar[d] & 0 \\
 & 0  & 0  & 0 & 
}
\end{gathered}
\end{equation}
from which we immediately obtain exact sequences
$$ 0\to G^n(-Z)\to F^{n+1}_{Z}\to F^n_{Z}\to G^n\to0 $$
and
$$
0 \to F^{n+1}(-Z) \to F^{n+1}\to F^{n+1}_{Z}\to 0.$$
From the first sequence we infer that $F^{n+1}_{Z}$ is flat over $Z$ around the chosen point and from the second combined with \cite[Corollaire 5.1.4]{BaSt} that $F^{n+1}$ is flat over $S$ around the chosen point again.  This proves Claim \ref{Claim>1}. 

If the bad locus $Z'$ of $F'$ on $S$ is not empty, we perform  a proper modification $S'\to S$ on $S$ so that $F'':=F'_{S'}/\Tors_{S'}(F'_{S'})$ is flat over $S'$ with divisorial bad locus $Z''$. Let $b$, $b'$ and $b''$ be the badness indices of $F$, $F'$ and $F''$, respectively. 
Let $Z_i\subset Z$ be an irreducible component of $Z$ and  $\pi_i:Z_i\to B_i$ be a good map for $F$. 
By construction of $F'$ it follows that the restrictions of the family $F'$ to  the fibres of $Z'_i\to B_i$ are isotrivial, hence the intersection  $Z_i\cap Z'$ fibres over a proper Zariski subset $B'_i$ of $B_i$. This shows that $b'<b$. When passing to the pair $(S',F'')$ it is clear that good maps for $F''$ are obtained from good maps for $F'$ by composing with $S'\to S$. In particular one has $b''\le b'<b$.  
 We have thus completed the induction argument and the proof of the theorem.    
\end{proof}

As in the case of one dimensional bases the previous arguments adapt to yield the following

\begin{prop}\label{prop:dimension>1} 
Let $X$ be a compact complex manifold 
and let $F$ be a  family over $S$ of $d$-dimensional sheaves on $X$, where $S$ is a connected smooth parameter space.  Suppose that general fibres of $F$ are  pure and let $Z\subset S$ be the union of the non-flatness locus of $F$ with the closed analytic subset of $S$ parametrizing non-pure sheaves. Let further $K\subset S$ be a compact subset. Then there exist a proper modification $S'\to S$ and a coherent sheaf $F'$ on $X\times S'$ such that $F'$ is flat over $S'$, all fibres of $F'$ over $K\times_SS'$ are pure and $F'$ coincides with $F_{S'}$ over $(S\setminus Z)\times_SS'$.
\end{prop}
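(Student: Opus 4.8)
The plan is to follow the strategy of Theorem \ref{thm:dimension>1} verbatim, replacing semistability by purity throughout and using the purity version of the one-dimensional result, Proposition \ref{prop:dimension one}, in place of Claim \ref{Claim}. First I would reduce to the case where $F$ is flat over $S$ and $Z$ is a simple normal crossings divisor: by Hironaka's flattening theorem \cite{Hir75} we may blow up $S$ at smooth centers so that $F$ becomes flat, and the non-pure locus (a closed analytic set by the remark after Theorem \ref{thm:relativeHNfiltration} on Maruyama's criterion \cite[Proposition 1.13, Lemma 1.12]{Maruyama-Construction}) becomes divisorial with normal crossings. As in the theorem it suffices to treat the finitely many irreducible components of $Z$ meeting $K$, so I assume every component meets $K$.

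Next I would set up the same crumbling/induction machinery, but now there is no secondary index $\delta$ to induct on: purity is the single condition to be achieved, so the inductive parameter is only the badness index $b$. Accordingly the relevant claim is the purity analogue of Claim \ref{Claim>1}: given $F$ flat over $S$ with $Z$ a normal crossings divisor, one produces a subsheaf $F'\subset F$ with $F'_{S\setminus Z}=F_{S\setminus Z}$, with pure fibres over general points of $Z$, and with non-flatness locus nowhere dense in $Z$. To prove this claim I would take $B^0:=T_{d-1}(F_Z)$, the maximal relative subsheaf of dimension at most $d-1$ (which exists and behaves well generically over $Z$), set $G^0:=F_Z/B^0$ and $F^1:=\Ker(F\to G^0)$, and run the descending filtration exactly as in Proposition \ref{prop:dimension one}. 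Working at general points of $Z$ where $B^n$ and $G^n$ are flat, the diagrams \ref{diagr:3X3prop} and the $(d-1)$-cycle monotonicity $0\le\cycle_{d-1}(B^{n+1})\le\cycle_{d-1}(B^{n})$ force the sequence to stabilize; the flatness propagation from $F^n$ to $F^{n+1}$ is verified using the relative snake-lemma diagram \ref{diagr:snake>1} together with \cite[Corollaire 5.1.4]{BaSt}, and the Artin approximation step \cite[Theorem 1.3]{Artin68} yields the contradiction, this time to purity rather than semistability, precisely as in the last sentence of the proof of Proposition \ref{prop:dimension one}.

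Finally I would close the induction on $b$: if the bad (non-pure) locus $Z'$ of $F'$ is nonempty, I perform a proper modification $S'\to S$ so that $F'':=F'_{S'}/\Tors_{S'}(F'_{S'})$ is flat with divisorial bad locus, and argue that the badness index drops. For a good map $\pi_i:Z_i\to B_i$ for $F$ (whose fibrewise restrictions of $F$ are constant), the construction of $F'$ is fibrewise-canonical along these fibres, so $F'$ is again constant on them; hence $Z_i\cap Z'$ fibres over a proper subset $B'_i\subsetneq B_i$, giving $b'<b$, and passing to $F''$ only composes good maps with $S'\to S$, so $b''\le b'<b$. This guarantees termination and produces the sheaf $F'$ whose fibres over $K\times_S S'$ are pure.

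I expect the main obstacle to be the same delicate point as in Theorem \ref{thm:dimension>1}: the objects $B^n$, $G^n$ and the crumbling collections are only defined and flat \emph{generically} over the divisor $Z$, so every argument must be carried out at general points of $Z$ and the global conclusions (nowhere-density of the non-flatness locus, the badness-index drop) must be assembled from this generic information. Verifying that the flatness-propagation diagram \ref{diagr:snake>1} and the relative Artin approximation apply uniformly enough to control the whole divisor, rather than a single fibre, is the technically sensitive part; the purely numerical stabilization is easier here than in the semistable case, since only the single monotone sequence of $(d-1)$-cycles is involved and no lexicographic slope bookkeeping is needed.
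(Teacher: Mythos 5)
Your proposal is correct and follows essentially the same route the paper intends: the paper proves Proposition \ref{prop:dimension>1} by simply remarking that the arguments of Theorem \ref{thm:dimension>1} adapt, with the relative torsion subsheaf $T_{d-1}$ playing the role of the relative maximal destabilizing subsheaf exactly as in the passage from Theorem \ref{thm:dimension one} to Proposition \ref{prop:dimension one}. Your write-up in fact supplies more of the details (the single monotone $(d-1)$-cycle sequence, the flatness propagation, the badness-index drop) than the paper itself records.
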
 

\section{Application to  moduli spaces of semistable sheaves} \label{section:application}

In this section we give an application of Theorem \ref{thm:dimension>1} to compactness of moduli spaces of semistable sheaves. For such a result to hold one needs boundedness for the class of sheaves one is considering. This is typically attained  by fixing the topological type or the Hilbert polynomial of these sheaves. This is the way to think of property $\cP$ in the statement below. The moduli spaces we will consider will be defined according to \cite[Definition 4.1.1]{HL} adapted to the complex analytic set-up, i.e. they will corepresent the functor associating to an analytic space $S$ the set of isomorphism classes of $S$-flat families of semistable sheaves with the property $\cP$  on $X$.   

\begin{thm}\label{thm:application} 
Let $X$ be a compact complex manifold endowed with a weak $(d,d')$-degree system and let $\cP$ be an open and closed property on flat families of $d$-dimensional coherent sheaves on  $X$. Suppose that the corresponding semistability property satisfies the following properties:
\begin{enumerate}
\item Zariski openness in flat families of coherent sheaves,
\item boundedness when restricted to the class of sheaves with the property $\cP$,
\item the conclusion of Theorem \ref{thm:dimension>1}  when restricted to the class of sheaves with the property $\cP$,
\item the existence of a coarse moduli space $M^{ss}_\cP$ for semistable sheaves with the property $\cP$.  
\end{enumerate}
Then
$M^{ss}_\cP$ is quasi-compact.
\end{thm}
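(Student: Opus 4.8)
The plan is to realize $M^{ss}_\cP$ as the continuous image of a compact space: boundedness will supply a parameter space, and the properness criterion of condition (3) will be the mechanism that makes this parameter space effectively compact. First I would use boundedness. By condition (2) the class of semistable sheaves with property $\cP$ is bounded, so in the sense of \cite{Toma16} there is a complex space $S$ carrying a flat family $E$ on $X\times S$ such that every semistable sheaf with property $\cP$ occurs as a fibre $E_s$; after a resolution of singularities and restriction to one irreducible component at a time we may take $S$ smooth and connected. By condition (1) the locus $S^{ss}\subset S$ of points with $E_s$ semistable is open, and by the coarse moduli property (4) the assignment $s\mapsto[E_s]$ defines a holomorphic map $S^{ss}\to M^{ss}_\cP$ which is surjective. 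The essential additional output of boundedness that I would extract is a form of relative compactness: a \emph{compact} subset $K\subset S$ such that $K\cap S^{ss}$ already surjects onto $M^{ss}_\cP$.

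Next I would apply Theorem~\ref{thm:dimension>1} through condition (3) to the data $(S,E,K)$, with $Z$ the union of the non-flatness locus of $E$ and the non-semistable locus. This yields a proper modification $\pi\colon S'\to S$ and a coherent sheaf $F'$ on $X\times S'$, flat over $S'$, whose fibres over $K':=\pi^{-1}(K)=K\times_S S'$ are all $(d,d')$-semistable, and with $F'=E_{S'}$ over $(S\setminus Z)\times_S S'$. Since $\cP$ is open and closed it is locally constant in flat families; as $F'$ is flat and agrees with $E$ over the dense open set $(S\setminus Z)\times_S S'$, all fibres of $F'$ share the property $\cP$. Hence every fibre of $F'$ over the compact set $K'$ is semistable and has property $\cP$.

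To conclude, I would note that $K'$ is compact, being the preimage of the compact set $K$ under the proper map $\pi$. The flat family $F'$ of semistable sheaves with property $\cP$ induces, by the coarse moduli property (4), a holomorphic map $\Phi\colon S'\to M^{ss}_\cP$, whose restriction $\Phi|_{K'}$ is continuous with compact source. For surjectivity, any $s\in K\cap S^{ss}$ lies outside $Z$ (there $E$ is flat and $E_s$ is semistable), so for $s'\in\pi^{-1}(s)$ one has $F'_{s'}\cong E_s$ and $\Phi(s')=[E_s]$; as $s$ ranges over $K\cap S^{ss}$ these classes exhaust $M^{ss}_\cP$. Therefore $M^{ss}_\cP=\Phi(K')$ is the continuous image of a compact space, hence compact.

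I expect the main obstacle to be the first step: distilling from the analytic boundedness of condition (2) a genuinely compact $K$ whose semistable locus still surjects onto $M^{ss}_\cP$, together with the reduction (via resolution of singularities and passage to irreducible components) to a smooth connected $S$ so that Theorem~\ref{thm:dimension>1} applies verbatim. The remaining points—local constancy of $\cP$ under the replacement of $E$ by $F'$, and the continuity and surjectivity of the induced moduli map—are comparatively routine once the parameter space and the compact set are in place.
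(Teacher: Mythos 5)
Your proposal is correct and follows essentially the same route as the paper: boundedness (in the sense of \cite{Toma16}) supplies a family $F$ over a smooth $S$ together with a compact $K\subset S$ whose semistable locus already exhausts $M^{ss}_\cP$, Theorem \ref{thm:dimension>1} replaces $F$ by a flat family $F'$ over a proper modification $S'\to S$ with semistable fibres over the compact set $K\times_S S'$, and the coarse moduli property then yields a surjection from this compact set onto $M^{ss}_\cP$. The only cosmetic difference is that the paper secures property $\cP$ by restricting to finitely many connected components of $S$ at the outset, whereas you deduce it afterwards from local constancy of $\cP$ along the flat family $F'$; both are valid.
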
 
\begin{proof}
Let $K\subset S$, $F$ be a compact subset of a smooth complex space and a family of coherent sheaves on $X$ over $S$ giving the boundedness of the set of isomorphism classes of semistable sheaves having property $\cP$ on $X$, cf. \cite{TomaLimitareaI}, \cite{TomaLimitareaII}. 
By restricting $S$ to a finite number of its connected components, we may suppose that all the sheaves in the corresponding family over $S$ have the property $\cP$. Let $S^{ss}\subset S$ be the open subset which parametrizes semistable sheaves, let $D:=S\setminus S^{ss}$ and let $S'\to S$ be the proper modification given by Theorem \ref{thm:dimension>1}. Then the family $F'$ given by the conclusion of Theorem \ref{thm:dimension>1}, the universal property of $M^{ss}_\cP$ and the choice of $K$ and $F$ show the existence of a surjective morphism $K\times_SS'\to M^{ss}_\cP$. This proves our statement. 
\end{proof}

In order to get compactness (or properness) for such a moduli space, one needs to show that $M^{ss}_\cP$ is separated. This can be checked by different methods in the following two examples. For the sake of completeness a separation result will be proved for $(n,0)$-semistable sheaves in Section \ref{section:separatedness}.  

We end this Section with two examples where our methods apply. 
\begin{Ex} Let $X$ be an $n$-dimensional complex projective manifold, $\omega$ a K\"ahler class on $X$, $c_j\in H^{2j}(X,\Z)$, $1\le j\le n$, and $\cP$ the property for a coherent sheaf on $X$ of having rank two and Chern classes equal to $c_j$, $1\le j\le n$. Consider the semistability condition given by the $(n,0)$-degree system induced by $\omega$ as described in Section \ref{section:degrees}. Then $M^{ss}_\cP$ is compact by Theorem \ref{thm:application}. Indeed,
condition (1) was proved in \cite{TomaLimitareaII}, condition (2) in \cite{GrebToma}, condition (3) in this paper and condition (4) as well as separatedness in \cite{GTgood}. In fact, by \cite{GTgood} the moduli space $M^{ss}_\cP$ is an algebraic space over $\C$ so condition (3) of Theorem \ref{thm:application} may be replaced by a requirement of properness over $1$-dimensional bases as provided by Theorem \ref{thm:dimension one}.
\end{Ex}
\begin{Ex}
Let $(X,\omega)$ be a compact K\"ahler surface, $r\in\Z_{>0}$, $c_1\in \NS(X)$, $c_2\in H^{4}(X,\Z)$, $1\le j\le n$, and $\cP$ the property for a coherent sheaf on $X$ to be of rank $r$ and to have Chern classes equal to $c_1$, $c_2$, respectively. Consider the semistability condition given by the $(2,1)$-degree system induced by $\omega$, i.e. the usual slope-semistability with respect to $\omega$. Suppose moreover that there exist no properly semistable sheaves with property $\cP$, i.e. all semistable sheaves with $\cP$ are stable. This will be the case if $c_1$ and $r$ are relatively prime and if $\omega$ is a generic class in the K\"ahler cone of $X$,  \cite[Section 2.2]{PeregoToma}. Then Theorem \ref{thm:application} may be applied also to this situation. Conditions (1) and (3) are checked as in the previous example. Condition (2) makes the object of \cite{BuTeTo2}. Finally, the moduli of stable sheaves exists as a Hausdorff open subspace of the moduli space of simple sheaves, whose existence was proved in \cite{KosarewOkonek}. For K\"ahler surfaces this example generalizes the compactness result  \cite[Theorem 4.3 (c)]{TomaDocumenta} by different methods. 
\end{Ex}

\section{A criterion for separatedness}\label{section:separatedness}

Here we prove a separation result in the complex analytic set-up, which for the sake of simplicity will only be formulated for $(n,0)$-semistable sheaves. In this special case any semistable sheaf admits a Jordan-H\"older filtration and a unique Jordan-H\"older graduation up to isomorphism, cf. \cite[Proposition 2.8]{GTgood}. This property is not in general true in the case of $(n,n-1)$-semistability which is the situation considered by Langton. This is the reason why the conclusion of his Theorem (1) \cite[p. 99]{Langton} cannot be as sharp as ours.  
Recall that if a moduli space $M^{ss}_\cP$ exists, semistable sheaves with  isomorphic Jordan-H\"older graduations will give the  same point in $M^{ss}_\cP$, \cite[Lemma 4.1.2]{HL}. It is therefore natural to formulate a separation criterion in terms of Jordan-H\"older graduations.

Let $X$ be an $n$-dimensional compact analytic space endowed with a weak $(n,0)$-degree system. For the induced semistability notion the following three results are proved as in the case of Gieseker-Maruyama semistability, cf.~\cite[Prop.~1.2.7]{HL}, \cite[Prop.~3.1]{Seshadri} and \cite[Section 9.3]{LePotier_Lectures}, and finally \cite[Prop.~1.5.2]{HL}, respectively. 

\begin{lemma}\label{lemma:morphisms}
Let $E$ and $E'$ be  semistable sheaves on $X$  and let $\phi :E\to E' $ be a non-zero morphism of $\cO_X$-modules. Then $\mu(E)\le \mu({E'})$. If equality holds, then $\im(\phi)$ is semistable and $\mu({\im(\phi)})=\mu(E)=  \mu({E'})$. If moreover the $n$-degree of $\im(\phi)$ coincides with the $n$-degree of $E$ or with the $n$-degree of $E'$ then $\im(\phi)$ is isomorphic to $E$ or to $E'$ respectively.
\end{lemma}

\begin{lemma}
\label{lemma:abelian_cat}
The full subcategory $\Coh^{ss}{(X;\deg_n,...,\deg_0;\mu)}$ of the category of coherent sheaves on $X$, whose objects are the semistable sheaves with fixed slope vector $\mu$ and the zero-sheaf, is abelian, noetherian and artinian. 
\end{lemma}

\begin{lemma}[Jordan-H\"older filtrations]\label{lemma:JH}
 Any semistable sheaf on $X$ has a Jordan-H\"older filtration  in the sense of \cite[Def.~1.5.1]{HL}. The associated graded sheaf is unique up to isomorphism.
\end{lemma}

We will denote by $\gr(E)$
the Jordan-H\"older graduation of a semistable sheaf $E$. 

We can now state the main result of this section.

\begin{thm}\label{thm:separation} 
Let  $X$ be an $n$-dimensional reduced compact complex space endowed with an $(n,0)$-degree system and let $E$ and $F$ be two $\D$-flat families of semistable $n$-dimensional sheaves on $X$ which are fibrewise isomorphic over $\D^*$. Then $$\gr(E_0)\cong\gr(F_0).$$
\end{thm}
\begin{proof}
We start by showing that after possibly shrinking $\D$ around $0$ a morphism $\phi:E\to F$ exists inducing isomorphisms in all fibres over $\D^*$. For this, we denote by $p:X\times\D\to\D$ the second projection, and by 
$\cH:=\cH om_{\cO_{X\times\D}}(p;E,F)=p_*\cH om{\cO_{X\times\D}}(E,F)$ the relative $\Hom$-sheaf with respect to $p$. 
This is a coherent sheaf on $\D$ of rank at least $1$. Let $s_{1,0},..., s_{m,0}$ be generators of the stalk $\cH_0$ as $\cO_{\D,0}$-module. These elements have representatives $s_1,..., s_m\in \cH(U)$ over a small neighbourhood $U$ of $0$ in $\D$ which will also generate any other stalk $\cH_t$, for $t\in U$. For some $t\in U\setminus\{0\}$ we may consider thus a combination $\phi$ of the sections $s_1,..., s_m$ over $\cO_\D(U)$ which restricts to an isomorphism at $t$. Then $\phi$ will restrict to an isomorphism at any point in a small neighbourhood of $0$ except possibly at $0$. We shrink $\D$ accordingly.

We denote by $\phi_0:E_0\to F_0$ the restriction of $\phi$ to the central fibre and by $E'$ the kernel of the composition 
$$E\to F\to F_0.$$  
Note that by continuity of the degree functions we have 
$\deg_p(E_0)=\deg_p(F_0)$ for all $p$.
If $\phi_0$ is an isomorphism the statement of the Theorem is clear. This will happen if and only if $\Coker (\phi_0)=0$ if and only if $\Coker (\phi)=0$ as follows from the following commutative diagram with exact rows and columns

$$
\begin{gathered}
\xymatrix{  
 & 0 \ar[d] & 0 \ar[d] & 0 \ar[d]& \\ 
0 \ar[r]& E' \ar[r]\ar[d] & F\otimes_\D\gm  \ar[d]\ar[r] & 
C'\ar[r] \ar[d] & 0 \\
0 \ar[r]& E\ar[r]\ar[d] & F \ar[d]\ar[r] & \Coker (\phi) \ar[d]\ar[r] & 0 \\
0\ar[r] & \im(\phi_0)  \ar[r]\ar[d] & F_0 \ar[d]\ar[r] & \Coker (\phi)\otimes_\D\cO_\D/\gm\ar[r] \ar[d] & 0 \\
 & 0  & 0  & 0 & 
}
\end{gathered}
$$
where $C':=\Ker(\Coker (\phi)\to\Coker (\phi)\otimes_\D\cO_\D/\gm$ and, as before, $\gm$ is the ideal sheaf of $\{0\}$ in $\D$. 
Note that the $\cO_X$-module $\Coker (\phi_0)\cong \Coker (\phi)\otimes_\D\cO_\D/\gm$ is semistable hence pure. It thus vanishes if and only if its $n$-degree vanishes. 
Supposing that this is not the case, we get
\begin{equation}
\label{inequality}
\deg_n(C')<\deg_n( \Coker (\phi)),
\end{equation}
where these degrees are computed for the associated $\cO_X$-modules on the regular part of $X$.
Note that $C:=Coker(\phi)$ and $C'$ are supported on $X\times0:=X\times\{0\}$ and hence are $\cO_{X\times \D}/p^*(\gm)^m$-modules for $m$ high enough.

We replace our families $E$, $F$ by $E'$ and $F':=F\otimes_\D\gm$ respectively and the exact sequence
$$0\to E\to F\to C\to 0$$
by
$$0\to E'\to F'\to C'\to 0.$$ In view of the inequality \eqref{inequality}, iteration of this procedure must end after finitely many steps with an isomorphism $E^{(l)}\to F^{(l)}$ and the theorem will follow if we can check that
$\gr(E_0)\cong\gr(E'_0)$ and $\gr(F_0)\cong\gr(F'_0)$. The second equality is clear since $F'_0\cong F_0$. It remains to establish the first one. 

Tensoring the sequence 
$$0\to E'\to E\to  \im(\phi_0)\to 0$$
by $\cO_\D/\gm$ over $\cO_\D$, we get
$$0\to\cT or_1^{\D}(\im(\phi_0), \cO_\D/\gm)\to E'_0\to E_0\to \im(\phi_0)\to0.$$
On the other hand we get $\cT or_1^{\D}(\im(\phi_0), \cO_\D/\gm)\cong \im(\phi_0)\otimes_\D\cO_\D/\gm$ by tensoring the sequence $0\to \gm\to \cO_\D\to\cO_\D/\gm\to0$ by $\im(\phi_0)$. Thus $\Ker(E'_0\to E_0)\cong \im(\phi_0)\otimes_\D\cO_\D/\gm\cong \im(\phi_0)$ and we get  devissages  by semistable sheaves 
$$0\to \Ker(\phi_0)\to E_0\to  \im(\phi_0)\to 0,$$
$$0\to \im(\phi_0)
\to E'_0\to  \Ker(\phi_0)\to 0$$
for $E_0$ and for $E'_0$ on $X$, hence
 $$\gr(E_0)\cong\gr(E'_0)$$
 and the Theorem is proven.
\end{proof}


\def\cprime{$'$} \def\polhk#1{\setbox0=\hbox{#1}{\ooalign{\hidewidth
  \lower1.5ex\hbox{`}\hidewidth\crcr\unhbox0}}}
  \def\polhk#1{\setbox0=\hbox{#1}{\ooalign{\hidewidth
  \lower1.5ex\hbox{`}\hidewidth\crcr\unhbox0}}}
\providecommand{\bysame}{\leavevmode\hbox to3em{\hrulefill}\thinspace}
\providecommand{\MR}{\relax\ifhmode\unskip\space\fi MR }
\providecommand{\MRhref}[2]{%
  \href{http://www.ams.org/mathscinet-getitem?mr=#1}{#2}
}
\providecommand{\href}[2]{#2}

\medskip
\medskip
\begin{center}
\rule{0.4\textwidth}{0.4pt}
\end{center}
\medskip
\medskip
\end{document}